\documentclass[12pt]{amsart}

\usepackage{hyperref}
\usepackage{amssymb, latexsym, amsthm, enumerate, mathptmx}

\usepackage{tikz}
\usetikzlibrary{arrows}
\usepackage[all]{xy}
\usepackage{color}
\usepackage{cleveref}
\usepackage{mathtools}
\usepackage{paralist}

\textwidth=15 cm
\textheight=22 cm
\topmargin=0.5 cm
\oddsidemargin=0.5 cm
\evensidemargin=0.5 cm
\footskip=40 pt

\newtheorem{theorem}{Theorem}[section]
\newtheorem{lemma}[theorem]{Lemma}
\newtheorem{corollary}[theorem]{Corollary}

\newtheorem{question}[theorem]{Question}
\newtheorem{conjecture}[theorem]{Conjecture}

\newtheoremstyle{definition}
  {6pt}
  {6pt}
  {}
  {}
  {\bfseries}
  {.}
  {.5em}
  {}%

\theoremstyle{definition}
\newtheorem{definition}[theorem]{Definition}

\newtheoremstyle{remark}
  {6pt}
  {6pt}
  {}
  {}
  {\bfseries}
  {.}
  {.5em}
  {}%

\theoremstyle{remark}
\newtheorem{remark}[theorem]{Remark}

\newtheoremstyle{example}
  {6pt}
  {6pt}
  {}
  {}
  {\bfseries}
  {.}
  {.5em}
  {}%

\theoremstyle{example}
\newtheorem{example}[theorem]{Example}

\makeatletter
\renewcommand\@makefntext[1]{%
\setlength\parindent{1em}%
\noindent
\makebox[1.8em][r]{}{#1}}
\makeatother

\title{Flawlessness of $h$-vectors of broken circuit complexes}

\author{Martina Juhnke-Kubitzke}
\address{Institut f\"ur Mathematik, Universit\"at Osnabr\"uck, 49069 Osnabr\"uck, Germany}
\email{juhnke-kubitzke@uos.de}

\author{Dinh Van Le}
\address{Institut f\"ur Mathematik, Universit\"at Osnabr\"uck, 49069 Osnabr\"uck, Germany}
\email{dlevan@uos.de}

\begin{document}

\begin{abstract}
 One of the major open questions in matroid theory asks whether the $h$-vector $(h_0,h_1,\ldots,h_s)$ of the broken circuit complex of a matroid $M$ satisfies the following inequalities:
$$
   h_0\leq h_1\leq \cdots\leq h_{\lfloor s/2\rfloor} \quad \text{and}\quad
   h_i\le h_{s-i}\ \text{ for }\ 0\leq i \leq \lfloor s/2\rfloor.  
$$
This paper affirmatively answers the question for matroids that are representable over a field of characteristic zero.
 \end{abstract}
 
\maketitle

\section{Introduction}

The notion of broken circuit complexes goes back to Whitney \cite{Wh32}, who used his broken circuit idea to interpret the coefficients of the chromatic polynomial of a graph. This notion was later extended to matroids by Rota \cite{Ro64} and Brylawski \cite{Bry77}. Given a loopless matroid $M$ on ground set $E$, which is endowed with a linear ordering $<$, a \emph{broken circuit} of $(M,<)$ is a circuit of $M$ with its least element removed. The \emph{broken circuit complex} of $(M,<)$, denoted by $BC_<(M)$ (or briefly $BC(M)$ if no confusion may arise), is defined by
$$BC(M):=\{F\subseteq E~:~ F \ \text{contains no broken circuit} \}.$$

Broken circuit complexes have shown to be important in multiple ways. From the algebraic point of view, they play an interesting role in the study of hyperplane arrangements. In particular, the broken circuit idea was used to construct bases for two fundamental algebraic objects associated with a hyperplane arrangement, namely, the Orlik--Solomon algebra and the Orlik--Terao algebra \cite{Bj92, PS06}. Through these constructions, broken circuit complexes have been an essential tool for studying important algebraic and homological properties of those algebras \cite{DGT14, EPY03, KR09, Le14, LR13}.

\footnotetext{
	\begin{itemize}
		\item[ ]
		{\it MSC (2010)}: 05B35, 13F55.
		\item[ ]
		{\it Keywords}: Broken circuit complex, flawless, $h$-vector, matroid, unimodal.
	\end{itemize}
}

From the combinatorial point of view, $f$-vectors and $h$-vectors of broken circuit complexes encode very useful information about the underlying matroids. Recall that the \emph{characteristic polynomial} of a matroid $M$ is defined as $\chi(M;t):=\sum_{X\subseteq E}(-1)^{|X|}t^{r(M)-r(X)}$, where $r(\cdot)$ denotes the rank function of $M$. This polynomial, which was introduced by Rota \cite{Ro64} as a generalization of the chromatic polynomial of a graph, plays a prominent role in the study of many combinatorial problems; see, e.g., \cite{BO92,Za87}. A fascinating property of $f$-vectors of broken circuit complexes, which primarily makes these complexes important, is the following formula due to Whitney \cite{Wh32} and Rota \cite{Ro64}:
\begin{equation}\label{WR}
 \chi(M;t)=\sum_{i=0}^r(-1)^if_it^{r-i},
\end{equation}
where $f_i$ denotes the number of faces of $BC(M)$ of cardinality $i$. The $h$-vector of $BC(M)$, on the other hand, encodes the shelling polynomial of $BC(M)$ \cite{Bj92}. Furthermore, several properties of $M$ (such as  connectivity \cite{Cr67} or being  a series--parallel network \cite{Bry71}) and of $BC(M)$ (such as Gorensteinness or being a complete intersection \cite{Le14}) are determined by the $h$-vector of $BC(M)$. 
 For these reasons, $f$-vectors and $h$-vectors of broken circuit complexes are among the most interesting numerical invariants in matroid theory.
Recently, great advances have been made in the study of $f$-vectors and $h$-vectors of broken circuit complexes. In particular, the long-standing conjectures of Rota--Heron \cite{Ro71, He72} and Welsh \cite{We76} on the unimodality and log-concavity of the $f$-vector of $BC(M)$ have been resolved by Adiprasito, Huh and Katz \cite{AHK}. Additionally, Huh \cite{Hu15} proved that the $h$-vector of $BC(M)$ is log-concave if $M$ is representable over a field of characteristic zero. Recall that a sequence $(a_0,a_1,\ldots,a_n)$ of real numbers is said to be \emph{log-concave} if $a_j^2\ge a_{j-1}a_{j+1}$ for all $1\le j\le n-1$. Also, this sequence is called \emph{unimodal} if there exists $0\leq p\leq n$ such that $a_0\leq a_1\leq \cdots \leq a_p\geq a_{p+1}\geq \cdots \geq a_n$. Observe that if a sequence of positive numbers is log-concave, then it is unimodal.

 Despite the significant advances mentioned above, $f$-vectors and $h$-vectors of broken circuit complexes are still rather mysterious. In fact, the problem of characterizing these vectors is widely regarded as out of reach at the moment. A more realistic problem would be to find as many restrictions on these vectors as possible.

Such restrictions are predicted by the following conjecture, which is in the focus of this paper:

\begin{conjecture}\label{con}
 Let $M$ be a loopless matroid. Let $(h_0,h_1,\ldots,h_s)$ be the $h$-vector of $BC(M)$, where $s$ is the largest index $j$ with $h_j\ne0$. Then the following inequalities hold:
\[ h_0\leq h_1\leq \cdots\leq h_{\lfloor s/2\rfloor} \quad \text{and}\quad h_i\le h_{s-i}\ \text{ for }\ 0\leq i \leq \lfloor s/2\rfloor.  \]
\end{conjecture}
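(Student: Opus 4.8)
The plan is to identify the $h$-vector of $BC(M)$ with the $h$-vector of a Cohen--Macaulay standard graded integral domain and then to apply Hibi's theorem, according to which the $h$-vector of such a ring is flawless. First, since the $f$-vector of $BC(M)$ is recovered from the characteristic polynomial of $M$ through \eqref{WR}, the $h$-vector of $BC(M)$ depends only on $M$ and coincides with that of the simplification of $M$; as the simplification of a matroid representable over a field of characteristic zero is again representable over a field of characteristic zero, I may assume throughout that $M$ is simple.

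Now fix a representation of the simple matroid $M$ over a field $k$ with $\ch k = 0$, realized as an essential central arrangement $\mathcal A = \{H_1,\dots,H_n\}$ with $H_i = \ker\alpha_i$ in $k^r$, $r = \rank M$. Let $X \subseteq k^n$ be the reciprocal plane of $\mathcal A$, that is, the Zariski closure of $\{(\alpha_1(x)^{-1},\dots,\alpha_n(x)^{-1}) : \alpha_i(x)\neq 0 \text{ for all } i\}$, and let $A = k[X]$ be its homogeneous coordinate ring, the Orlik--Terao algebra of $\mathcal A$. Two properties of $A$ are decisive. First, $X$ is the closure of the image of an irreducible variety, hence an irreducible cone through the origin, so $A$ is a \emph{standard graded integral domain}. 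Second, by the Gr\"obner degeneration of Proudfoot and Speyer \cite{PS06}, for a suitable term order the initial ideal of the defining ideal of $X$ is the Stanley--Reisner ideal of $BC(M)$; consequently $A$ and the Stanley--Reisner ring $k[BC(M)]$ have the same Hilbert series, so the $h$-polynomial of $A$ is exactly the $h$-vector $(h_0,\dots,h_s)$ of the complex $BC(M)$. Finally, $BC(M)$ is (pure) shellable, so $k[BC(M)]$ is Cohen--Macaulay; and since passing from an initial ideal back to the original ideal preserves Krull dimension and does not decrease depth, $A$ is Cohen--Macaulay as well.

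It remains to invoke Hibi's theorem: the $h$-vector $(h_0,\dots,h_s)$ of a Cohen--Macaulay standard graded integral domain over a field satisfies $h_0\le h_1\le\cdots\le h_{\lfloor s/2\rfloor}$ together with $h_i\le h_{s-i}$ for $0\le i\le\lfloor s/2\rfloor$. Since $h_s\neq0$, the index $s$ is the one appearing in the conjecture, and applying the theorem to $A$ gives precisely the desired inequalities for $BC(M)$.

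The essential content of this argument is conceptual rather than computational: once the $h$-vector of $BC(M)$ is seen to be the $h$-vector of a Cohen--Macaulay domain, Hibi's theorem finishes the proof, and --- in contrast with Huh's log-concavity result \cite{Hu15} --- no Lefschetz- or Hodge-theoretic input is needed. The steps that require care are the bookkeeping in the reductions (simplification, the identification of the index $s$, the agreement of the simplicial $h$-vector of $BC(M)$ with the $h$-polynomial of $k[BC(M)]$) and the transfer of Cohen--Macaulayness along the Proudfoot--Speyer degeneration. The hypothesis $\ch k = 0$ is used only through these cited properties of the Orlik--Terao algebra, so the argument should in fact persist over an arbitrary field; the genuinely hard case is the non-representable one, where no ambient domain carrying the right $h$-vector is at hand, and it remains open.
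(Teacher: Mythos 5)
Your argument takes a genuinely different route from the paper's, and---provided the Hibi result you invoke is exactly as strong as you state---it is correct and strictly shorter. The paper is combinatorial and inductive: \Cref{th32} shows that any \emph{minor-closed, unimodal} class of matroids is strongly flawless, by analyzing removable series classes and comparing, in two different ways, the complementary $h$-vector $\bar h(M)$ with $h(M/S)$; \Cref{co34} then imports unimodality from Huh's log-concavity theorem \cite{Hu15}, whose proof is Hodge-theoretic. You instead argue purely algebraically: the Proudfoot--Speyer Gr\"obner degeneration identifies the $h$-vector of $BC(M)$ with the $h$-vector of the Orlik--Terao algebra $C(\mathcal{A})$, which is a standard graded Cohen--Macaulay integral domain, and Hibi's theorem on Hilbert functions of Cohen--Macaulay domains (T.~Hibi, \emph{Flawless $O$-sequences and Hilbert functions of Cohen--Macaulay integral domains}, J.\ Pure Appl.\ Algebra \textbf{60} (1989), 245--251) then yields strong flawlessness directly. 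The reductions you perform are sound: the $h$-vector depends only on the simple matroid; the reciprocal plane is an irreducible cone, so $C(\mathcal{A})$ is a standard graded domain; and Cohen--Macaulayness does transfer from $k[BC(M)]$ back to $C(\mathcal{A})$ along the degeneration because depth can only go up when passing from the initial ideal to the ideal. Your route buys brevity, avoids all Lefschetz/Hodge input, and, as you note, should extend to representability over any field (after passing to an infinite extension so that Hibi's generic linear system of parameters is available). What it gives up is the paper's structural \Cref{th32}, which applies to \emph{any} minor-closed unimodal class, realizable or not.

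Two points deserve careful checking on your side. First, be sure that Hibi's 1989 domain theorem really delivers the monotone inequalities $h_0\le h_1\le\cdots\le h_{\lfloor s/2\rfloor}$ in addition to the palindromic inequalities $h_i\le h_{s-i}$; the conjecture needs the full ``strongly flawless'' conclusion, not merely the flawless one, and the two are genuinely different in strength. Second, note the tension with the authors' own remark following the Orlik--Terao corollary, namely that the Artinian reduction of $C(\mathcal{A})$ need not possess $g$-elements and that ``it would therefore be difficult to provide an algebraic proof.'' Hibi's domain theorem does not proceed via $g$-elements, so this remark is not a logical obstruction to your argument, but it strongly suggests that the authors did not regard Hibi's theorem as applicable here---which is a flag that the precise hypotheses and conclusion of that theorem should be re-examined before you rely on it.
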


A sequence $(h_0,h_1,\ldots,h_s)$ of real numbers that satisfies the inequalities in the above conjecture is called \emph{strongly flawless}, and it is called \emph{flawless} if $h_i\le h_{s-i}\ \text{ for }\ 0\leq i \leq \lfloor s/2\rfloor$. Clearly, the strongly flawless condition can be rephrased as $h_i\le h_j$ for $0\le i\le j\le s-i$. Moreover, for a unimodal sequence, being flawless is equivalent to being strongly flawless.

\Cref{con} goes back to a still wide open conjecture of Stanley \cite{Sta77}, which anticipates that the $h$-vector of the \emph{independence complex} $IN(M)$ of a matroid $M$ is a pure $O$-sequence. The reader is referred to \cite{BMM+} for the definition of pure $O$-sequences as well as recent developments in the study of these interesting objects. Recall that $IN(M)$ is the collection of all independent sets in $M$, and that it contains $BC(M)$ as a subcomplex. In \cite{Hi89}, Hibi showed that a pure $O$-sequence is strongly flawless. Inspired by this result, he proposed a weaker version of Stanley's conjecture in \cite{Hi92}, predicting that the $h$-vector of $IN(M)$ must be strongly flawless. This conjecture was resolved by Chari \cite{Ch97}, who proved that $IN(M)$ has a \emph{convex ear decomposition}. Subsequently, an algebraic version of Chari's proof, which shows the existence of \emph{$g$-elements} for a general Artinian reduction of the Stanley--Reisner ring of $IN(M)$, was given by Swartz in \cite{Sw03}. Therein, \Cref{con} was also mentioned implicitly. As the set of $h$-vectors of independence complexes is strictly contained in the set of $h$-vectors of broken circuit complexes (see \cite{Bry77}), \Cref{con} is stronger than and, in particular, implies Hibi's conjecture. It is worth emphasizing that the techniques of Chari and Swartz for proving Hibi's conjecture do not work in the case of broken circuit complexes, and thus cannot be used to establish \Cref{con}. Indeed, Swartz \cite{Sw03} provided examples of matroids whose broken circuit complexes do not admit $g$-elements and hence also fail to have a convex ear decomposition.

The main goal of this paper is to verify \Cref{con} for matroids representable over a field of characteristic zero. In fact, we prove a somewhat stronger result. We say that a class of matroids $\mathcal{M}$ has a certain property (such as {unimodal} or strongly flawless) if the $h$-vector of the broken circuit complex of every matroid in $\mathcal{M}$ has that property. The main result of this paper is as follows. 

\begin{theorem}\label{th32}
 Let $\mathcal{M}$ be a minor-closed class of matroids. If $\mathcal{M}$ is unimodal, then it is strongly flawless.
\end{theorem}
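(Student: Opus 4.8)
\emph{Plan of proof.}
The plan is to argue by induction on $|E(M)|$, using the deletion–contraction behaviour of broken circuit complexes together with the hypothesis that $\mathcal{M}$ is unimodal. The engine is the identity
\[
  h(BC(M),t)=h(BC(M\setminus e),t)+t\cdot h(BC(M/e),t),
\]
valid when $e$ is neither a loop nor a coloop of the loopless matroid $M$ and is not parallel to another element (so that $M/e$ is again loopless); it follows from \eqref{WR}, the recursion $\chi(M;t)=\chi(M\setminus e;t)-\chi(M/e;t)$ for such $e$, and the fact that $BC$ of a loopless matroid is a cone over its smallest vertex, which lets one pass between $\chi$ and $h$.

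Before applying it I would make the usual reductions. If $M$ has a loop then $h(BC(M))=(1)$, which is strongly flawless. If $M$ has two parallel elements, deleting one leaves $BC(M)$ unchanged and produces a smaller matroid in $\mathcal{M}$, so we may assume $M$ simple. If $M=M_1\oplus M_2$ nontrivially then $BC(M)=BC(M_1)\ast BC(M_2)$, whence $h(BC(M),t)=h(BC(M_1),t)\cdot h(BC(M_2),t)$; using the inductive hypothesis on $M_1,M_2$ and the (elementary but not completely obvious) fact that a product of strongly flawless unimodal sequences with positive first and last terms is again strongly flawless, we may assume $M$ connected. For $M$ connected, simple and with at least two elements we have $r:=r(M)\ge 2$, $s:=\deg h(BC(M),t)=r-1$, and every element is admissible for the recursion.

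Now fix an element $e$ and write $a(t)=h(BC(M\setminus e),t)$, $b(t)=h(BC(M/e),t)$, $p=\deg a$, $q=\deg b$, so that $h(BC(M),t)=a(t)+t\,b(t)$. Using $\deg h(BC(N),t)=r(N)-(\text{number of connected components of }N)$ — which comes from the same join/product formula — one gets $p\le s$, $q\le s-1$ and $s=\max(p,q+1)$; and by a theorem of Tutte at least one of $M\setminus e$, $M/e$ is connected, which in these terms says $p=s$ or $q=s-1$. The inductive hypothesis makes $a$ and $b$ strongly flawless, hence flawless, and they are unimodal because $\mathcal{M}$ is unimodal; and $h(BC(M),t)$ is unimodal for the same reason. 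Since for unimodal sequences ``flawless'' and ``strongly flawless'' coincide, the theorem reduces to showing that $c:=a+t\,b$ is flawless, i.e. $c_i=a_i+b_{i-1}\le a_{s-i}+b_{s-i-1}=c_{s-i}$ for $0\le i\le\lfloor s/2\rfloor$.

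This final step is the crux and the main obstacle. The natural attack is to use flawlessness of $a$ in its own degree $p$ (so $a_i\le a_{p-i}$) and of $b$ in its own degree $q$ (so $b_{i-1}\le b_{q-i+1}$): in the sub-cases where the degrees are compatible (for instance $p=s$ and $q=s-2$, or $p=q=s-1$) these bounds line up to give $c_i\le c_{s-i}$ directly. In the remaining sub-cases one must absorb a shift of indices near the centre, and here unimodality of $c$ enters; but unimodality alone does not suffice — witness the purely combinatorial near-miss $a=(1,10,10)$, $b=(1,2,3,1)$, $c=a+tb=(1,11,12,3,1)$, which meets every hypothesis listed above yet is not flawless. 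The genuine content is to extract the extra restriction that $a$ and $b$ inherit from being $h$-vectors of broken circuit complexes whose matroids, and all of whose minors, lie in the unimodal minor-closed class $\mathcal{M}$ (e.g. that $a,b$ are $O$-sequences, and that when their degrees are deficient they factor through the direct-sum decomposition into lower-degree flawless unimodal sequences), and to propagate the corresponding strengthened invariant through the induction. Identifying and carrying that invariant is where I expect essentially all of the real difficulty to lie.
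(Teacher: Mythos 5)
Your framework is the right one and your self-assessment is accurate: the single-element deletion--contraction argument, even after reducing to connected matroids and invoking unimodality plus the inductive flawlessness of $M-e$ and $M/e$, is not enough, as your purely numerical near-miss $a=(1,10,10)$, $b=(1,2,3,1)$ shows. So let me name exactly what you are missing, since it is not the invariant you guess at (nothing like $O$-sequences or factorizations of deficient-degree pieces is used).

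First, a smaller point: the paper reduces not to \emph{simple} connected matroids but to \emph{parallel irreducible} connected matroids, via \Cref{pr21}(ii) together with \Cref{red} (strongly flawless times strongly flawless is strongly flawless, which is what your ``elementary but not completely obvious'' fact for direct sums generalises to). Parallel irreducibility is equivalent to $M/e$ being connected for every $e$; it is strictly stronger than simplicity. For parallel irreducible $M$ the paper then splits into two cases. If some $M-e$ is also connected, the single-element recursion works cleanly, because then $h(M-e)$ has top index $s$ and $h(M/e)$ has top index $s-1$, which yields
\begin{equation*}
  \bar h_i(M)=\bar h_i(M-e)+\bar h_i(M/e)+\bigl(h_i(M/e)-h_{i-1}(M/e)\bigr),
\end{equation*}
and all three summands are nonnegative by induction. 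Your counterexample is not of this type: it silently assumes index shifts that only occur when $M-e$ is \emph{disconnected} for every $e$, i.e.\ when $M$ is minimally connected. That is the genuinely hard case, and it is exactly where your argument stops.

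The missing idea is to stop deleting single elements and instead exploit a \emph{non-trivial removable series class} $S$ of the minimally connected matroid $M$ (\Cref{rsc}). The paper proves two complementary identities for the complementary $h$-vector $\bar h(M)$ in terms of the minors $M/e$, $M-S$, $M/S$, and $\widetilde M=M/(S-e)$ (Lemmas \ref{series1} and \ref{series2}). Lemma~\ref{series1} is obtained by iterating deletion--contraction along $S$; Lemma~\ref{series2} comes from writing $M$ as a $2$-sum $\widetilde M\oplus_2 C$ with $C$ a circuit (\Cref{lm:2sum}), which turns the $h$-polynomial identity \eqref{eq:2sum2} into explicit coefficient formulas. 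Each identity leaves one ``bad'' term: a difference $h_{i-m+1}(M/S)-h_{i-m}(M/S)$ in the first, and a sum $\sum_{j=1}^{m-1}(h_{i-j}(M/S)-h_{s-i-j}(M/S))$ in the second. The point of having \emph{both} is that unimodality of $h(M/S)$ forces at least one of the two bad terms to be nonnegative: if $h_{i-m+1}(M/S)<h_{i-m}(M/S)$ then $h(M/S)$ is already on its decreasing side, which makes every $h_{i-j}(M/S)-h_{s-i-j}(M/S)\ge 0$. This dovetailing of two formulas against the single unimodality hypothesis is the real engine of the proof, and it is the step your proposal flags as open without identifying.
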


Note that this theorem implies \Cref{con} for matroids representable over a field of characteristic zero, by virtue of Huh's log-concavity result \cite{Hu15} (see \Cref{co34}).

Let us briefly outline how the proof of \Cref{th32} proceeds. As mentioned before, a unimodal, flawless sequence is also strongly flawless. So it suffices to show that the $h$-vector of $BC(M)$ is flawless for every matroid $M\in\mathcal{M}$. To this end, we first reduce the proof to the case where $M$ is minimally connected (see \Cref{red}). In this case, $M$ contains a removable series class $S$ (see \Cref{rsc}). We then find two different ways to relate the $h$-vector of $BC(M)$ to the $h$-vector of $BC(M/S)$ (see \Cref{series1,series2}). Combining these comparisons, the flawlessness of the $h$-vector of $BC(M)$ will follow by induction and the unimodality of the $h$-vector of $BC(M/S)$.

This paper is organized as follows. In the next section, we review the basic notions of matroids and broken circuit complexes. \Cref{sec3} contains the proof of \Cref{th32} 
and its immediate application to Orlik--Terao algebras. Finally, some questions related to our work are discussed in \Cref{sec4}.

\section{Preliminaries}

\subsection{Matroids}

The notion of matroids was introduced by Whitney \cite{Wh35} as a common generalization of dependence in linear algebra and graph theory. Since then a rich theory of matroids has been developed which provides a framework for approaching many combinatorial problems. In the following, we collect the needed facts and definitions from matroid theory, referring to the seminal book by Oxley \cite{Ox11} for more details.

\begin{definition}
 A \emph{matroid} $M=(E,\mathcal{I})$ consists of a finite ground set $E$ and a nonempty collection $\mathcal{I}$ of subsets of $E$, called \emph{independent sets}, satisfying the following conditions:
\begin{enumerate}
\item
If $I\in\mathcal{I}$ and $J\subseteq I$, then $J\in\mathcal{I}$.
\item
If $I,I'\in\mathcal{I}$ and $|I|<|I'|$, then there exists $e\in I'-I$ such that $I\cup e\in\mathcal{I}$.
\end{enumerate}
\end{definition}

In a matroid $M=(E,\mathcal{I})$, a \emph{basis} is a maximal independent set. A subset of $E$ is called \emph{dependent} if it is not a member of $\mathcal{I}$. A \emph{circuit} is a minimal dependent set, and an \emph{$m$-circuit} is a circuit of cardinality $m$. For any set $X\subseteq E$, all maximal independent subsets of $X$ have the same size, which is called the \emph{rank} $r(X)$ of $X$. In particular, the rank of $E$, which is the common cardinality of all the bases of $M$, is also called the \emph{rank} of $M$ and denoted by $r(M)$. A matroid can be specified by either its collection of bases, its collection of circuits, or its rank function. In fact, there are equivalent definitions of matroids in terms of bases, circuits, and rank functions.

Two matroids $M=(E,\mathcal{I})$ and $M'=(E',\mathcal{I}')$ are \emph{isomorphic} if there exists a bijection $\varphi:E\to E'$ such that for every subset $X$ of $E$, $X\in\mathcal{I}$ if and only if $\varphi(X)\in\mathcal{I}'$.

The prototypical example of a matroid is the \emph{vector matroid} $M[A]$ of a matrix $A$: the ground set $E$ of $M[A]$ is taken to be the set of columns of $A$, and a subset $I\subseteq E$ is independent if and only if the corresponding columns are linearly independent. A matroid is \emph{representable} over a field $K$ if it is isomorphic to the vector matroid of a matrix over $K$. It should be noted, however, that not every matroid is representable over some field; see \cite[Proposition 6.1.10]{Ox11}.

Let $M$ be a matroid on the ground set $E$. Let $\mathcal{B}$ be the collection of bases of $M$. Then $\mathcal{B}^*=\{E-B~:~ B\in\mathcal{B}\}$ is also the collection of bases of a matroid $M^*$. We call this matroid the \emph{dual} of $M$. For example, $M[A]^*\cong M[A^*]$ for any matrix $A$, where $A^*$ is a matrix whose row space is the orthogonal space of the row space of $A$.

An element $e\in E$ is called a \emph{loop} if $\{e\}$ is a circuit of $M$. We say that $M$ is \emph{loopless} if it has no loops. A loop of $M^*$ is called a \emph{coloop} of $M$. More generally, circuits of $M^*$ are  called \emph{cocircuits} of $M$. A \emph{series class} $S$ of $M$ is a maximal subset of $E$ such that $S$ contains no coloops and if $e,f$ are distinct elements of $S$, then $\{e,f\}$ is a cocircuit of $M$. A series class is \emph{non-trivial} if it contains at least two elements. Notice that if $S$ is a series class and $C$ is a circuit of $M$, then either $C\cap S=\emptyset$ or $S\subseteq C$. This follows from the well-known fact that a circuit and a cocircuit of $M$ cannot have just a single element in common; see \cite[Proposition 2.1.11]{Ox11}.

Let $X$ be a subset of $E$. The \emph{deletion} of $X$ from $M$, denoted $M-X$, is the matroid on ground set $E-X$ whose independent sets are the independent sets of $M$ that are contained in $E-X$. The \emph{contraction} of $X$ from $M$ is defined to be $M/X=(M^*-X)^*$. Note that the operations of deletion and contraction commute, i.e., $(M-X)/Y=M/Y-X$ for disjoint subsets $X$ and $Y$ of $E$. 
A \emph{minor} of $M$ is a matroid which can be obtained from $M$ by a sequence of deletions and contractions. 
A class of matroids $\mathcal{M}$ is said to be \emph{minor-closed} if for every $M\in\mathcal{M}$, all minors of $M$ are also members of $\mathcal{M}$.

Let $M_1$ and $M_2$ be matroids on disjoint ground sets $E_1$ and $E_2$. Their \emph{direct sum} $M_1\oplus M_2$ is the matroid on ground set $E_1\cup E_2$ 
whose independent sets are all possible unions of an independent set of $M_1$ with an independent set of $M_2$. The direct sum of a finite collection of matroids is then defined by iterating the previous construction. 
A matroid is called \emph{connected} if it is not the direct sum of two smaller matroids. Otherwise, it is called \emph{disconnected}. 
An arbitrary matroid $M$ can be decomposed uniquely (up to ordering) as a direct sum $M=M_1\oplus\cdots\oplus M_k$, where $M_1,\ldots,M_k$ are connected matroids. In that case, the matroids $M_1,\ldots,M_k$ are called the \emph{connected components} of $M$.

Let $M$ be a connected matroid on $E$. Then $M$ is called \emph{minimally connected} if $M-e$ is disconnected for every $e\in E$. On the other hand, a series class $S$ of $M$ is said to be \emph{removable} if $M-S$ is connected. Evidently, every removable series class of a minimally connected matroid is non-trivial. For the existence of removable series classes we will need the following result.

\begin{lemma}\label{rsc}
 Let $M$ be a connected matroid on the ground set $E$ with at least two elements. Then $M$ contains a removable series class. In particular, if $M$ is minimally connected, then it contains a non-trivial removable series class.
 \end{lemma}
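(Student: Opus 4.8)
The plan is to dualise and then induct. Since connectedness is self-dual and a connected matroid with at least two elements has no coloops, the dual $N:=M^{*}$ is a connected, loopless matroid on $E$. The series classes of $M$ are exactly the parallel classes of $N$, that is, the rank-one flats of $N$, and for such a flat $P$ we have $M\setminus P=(N/P)^{*}$, so $M\setminus P$ is connected if and only if $N/P$ is. Hence it suffices to find a rank-one flat $P$ of $N$ with $N/P$ connected.

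To reduce to the simple case I would pass to the simplification $L:=\mathrm{si}(N)$, which is again connected. For a parallel class $P$ of $N$ with representative $e\in E(L)$, the matroid $N/P$ is loopless (a rank-one flat is a flat, so contracting it creates no loops) and has the same simplification as $L/e$, by standard properties of simplification; since the connectedness of a loopless matroid depends only on its simplification, $N/P$ is connected if and only if $L/e$ is. If $L$ has a single element then $N=U_{1,|E|}$, so $M$ is a circuit and $P=E$ works, as $M\setminus E$ is the (connected) empty matroid. Otherwise $L$ is simple and connected with at least two --- and hence, since no simple matroid on two elements is connected, at least three --- elements, and the whole statement reduces to the following claim: a simple connected matroid $L$ with $|E(L)|\ge 3$ has an element $e$ such that $L/e$ is connected.

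I would prove this claim by induction on $|E(L)|$. For $|E(L)|=3$ we have $L\cong U_{2,3}$ and $L/e\cong U_{1,2}$. For the inductive step, assume for contradiction that $L/e$ is disconnected for every $e\in E(L)$. By Tutte's theorem that, for any element $e$ of a connected matroid with at least two elements, at least one of $M\setminus e$ and $M/e$ is connected (see \cite{Ox11}), every $L\setminus e$ is connected. Fix $e_0\in E(L)$; then $L\setminus e_0$ is simple, connected and has $|E(L)|-1\ge 3$ elements, so by the inductive hypothesis there is $f\in E(L)\setminus\{e_0\}$ with $(L\setminus e_0)/f$ connected. Since deletion and contraction commute, $(L/f)\setminus e_0=(L\setminus e_0)/f$ is connected, while $L/f$ is disconnected. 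A matroid that is disconnected but becomes connected after deleting a single element $e_0$ must be the direct sum of the one-element matroid on $\{e_0\}$ with a connected matroid; since $L/f$ is loopless (because $L$ is simple), $e_0$ is then a coloop of $L/f$. A short rank computation --- using that $f$ is not a coloop of $L$, so $r(L/f)=r(L)-1$ --- shows $e_0$ is a coloop of $L$, contradicting the connectedness of $L$. This proves the claim, hence the existence of a removable series class; and the ``in particular'' part is immediate, since a one-element removable series class $\{e\}$ of a minimally connected matroid would make $M\setminus e$ connected.

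I expect the claim to be the main obstacle; the surrounding argument is routine bookkeeping with duality, series and parallel classes, and the stability of connectedness under simplification. The key point inside the claim is that Tutte's dichotomy turns the hypothesis ``every single-element contraction disconnects $L$'' into ``every single-element deletion keeps $L$ connected'', and then the inductive hypothesis, applied to such a deletion, forces a coloop and the contradiction.
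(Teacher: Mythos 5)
Your proof is correct, but it takes a genuinely different route from the paper's. The paper handles the single-series-class case directly (then $E$ is a circuit) and for the remaining case simply cites \cite[Proposition 5.3]{Sw04}. You instead give a self-contained argument: dualize so that removable series classes of $M$ become rank-one flats $P$ of $N=M^*$ with $N/P$ connected, reduce via simplification to the case of a simple connected matroid $L$, and prove by induction on $|E(L)|$ that such an $L$ with at least three elements has an element $e$ with $L/e$ connected. The inductive step is the nice part: the hypothesis ``every single-element contraction disconnects $L$'' combines with Tutte's dichotomy (\Cref{hpol}(ii)) to give ``every single-element deletion preserves connectedness,'' and then applying the inductive hypothesis to $L\setminus e_0$ produces an $f$ for which $L/f$ is disconnected yet $(L/f)\setminus e_0$ is connected, forcing $e_0$ to be a coloop of $L/f$ and hence of $L$ --- a contradiction. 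This buys you independence from Swartz's result at the cost of a somewhat longer argument; the paper's citation is shorter but leans on a nontrivial external lemma. One tiny slip: you justify $r(L/f)=r(L)-1$ by saying $f$ is not a \emph{coloop} of $L$; the correct hypothesis for the contraction rank drop is that $f$ is not a \emph{loop}, which holds because $L$ is simple. The rest of the bookkeeping (connectedness self-dual, connectedness invariant under simplification for loopless matroids, the $m=1$ case giving $U_{1,|E|}$, and deriving the ``in particular'' clause from minimal connectedness) is all sound.
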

 
 \begin{proof}
If $M$ has exactly one series class, then $E$ forms a circuit and hence $E$ itself is a removable series class of $M$. When $M$ contains at least two series classes, the result follows from \cite[Proposition 5.3]{Sw04}. \end{proof}

Let $M_1$ and $M_2$ be matroids on ground sets $E_1$ and $E_2$ with $E_1\cap E_2=\{e\}$. Assume that $e$ is neither a loop nor a coloop of $M_1$ or $M_2$. Let $\mathcal{C}(M_i)$ denote the collection of circuits of $M_i$. The \emph{parallel connection} $P(M_1,M_2)$ of $M_1$ and $M_2$ with respect to $e$ is the matroid on $E_1\cup E_2$ whose collection of circuits is given by
\[
  \mathcal{C}(P(M_1,M_2))=\mathcal{C}(M_1)\cup\mathcal{C}(M_2)\cup\{C_1\cup C_2-e~:~ e\in C_i\in\mathcal{C}(M_i)\ \text{ for }\ i=1,2\}.
\]
The deletion $P(M_1,M_2)-e$ is called the \emph{$2$-sum} of $M_1$ and $M_2$, denoted by $M_1\oplus_2M_2$. Note that the circuits of $M_1\oplus_2M_2$ are the circuits of $P(M_1,M_2)$ not containing $e$; see \cite[3.1.14]{Ox11}. Thus
\begin{equation}\label{eq:2sum0}
 \mathcal{C}(M_1\oplus_2M_2)=\mathcal{C}(M_1-e)\cup\mathcal{C}(M_2-e)\cup\{C_1\cup C_2-e~:~ e\in C_i\in\mathcal{C}(M_i) \text{ for } i=1,2\}.
\end{equation}
 The following simple observation will be useful in \Cref{sec3}. For brevity's sake we call a matroid an \emph{$m$-circuit} if its ground set is an {$m$-circuit}.

\begin{lemma}\label{lm:2sum}
 Let $S$ be a series class of a matroid $M$ with $|S|=m$. Set $\widetilde{M}=M/(S-e)$ for some $e\in S$. Then $M\cong \widetilde{M}\oplus_2 C$, where $C$ is an $(m+1)$-circuit containing $e$.  
\end{lemma}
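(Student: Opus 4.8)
The plan is to exhibit the isomorphism explicitly and then verify it by comparing collections of circuits.

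First I would set up the data. Put $\widetilde E=(E\setminus S)\cup\{e\}$, the ground set of $\widetilde M=M/(S\setminus e)$, pick a symbol $x\notin E$, and let $C$ be the matroid on the $(m+1)$-element set $S\cup\{x\}$ whose unique circuit is $S\cup\{x\}$ itself. Then $C$ is an $(m+1)$-circuit containing $e$, and since $|S\cup\{x\}|=m+1\ge 2$, the element $e$ is neither a loop nor a coloop of $C$. Because the elements of $S\setminus e$ have been contracted away and $x\notin E$, we have $\widetilde E\cap(S\cup\{x\})=\{e\}$, so (once we also check that $e$ is not a loop or coloop of $\widetilde M$) the $2$-sum $N:=\widetilde M\oplus_2 C$ with basepoint $e$ is defined; its ground set is $(\widetilde E\cup(S\cup\{x\}))\setminus\{e\}=(E\setminus e)\cup\{x\}$. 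Let $\varphi$ be the bijection from $(E\setminus e)\cup\{x\}$ to $E$ that is the identity on $E\setminus e$ and sends $x\mapsto e$. I would prove that $\varphi$ is an isomorphism $N\to M$ by showing $\varphi(\mathcal C(N))=\mathcal C(M)$.

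Two facts go into this: (i) the standard description of circuits under contraction, that $\mathcal C(M/X)$ is the set of minimal nonempty members of $\{C'\setminus X:C'\in\mathcal C(M)\}$ (see \cite[\S3.1]{Ox11}); and (ii) the observation recalled above, that every circuit $C'$ of $M$ satisfies $C'\cap S=\emptyset$ or $S\subseteq C'$. The argument then runs in three steps. \emph{Step 1:} using (i) and (ii), show that $\mathcal C(\widetilde M)$ is the disjoint union of the circuits $C'$ of $M$ with $C'\cap S=\emptyset$ (these are precisely the circuits of $\widetilde M$ avoiding $e$, i.e.\ $\mathcal C(\widetilde M-e)$) and the sets $(C'\setminus S)\cup\{e\}$ with $S\subseteq C'\in\mathcal C(M)$ (the circuits of $\widetilde M$ through $e$); in passing one reads off that $e$ is not a loop of $\widetilde M$ --- some circuit of $M$ contains $S$, since $S$ has no coloops --- and not a coloop. \emph{Step 2:} substitute this into \eqref{eq:2sum0} applied with basepoint $e$; since $C$ has the single circuit $S\cup\{x\}\ni e$ while $C-e$ is free (hence circuit-free), \eqref{eq:2sum0} reduces to $\mathcal C(N)=\mathcal C(\widetilde M-e)\cup\{(C'\setminus e)\cup\{x\}:S\subseteq C'\in\mathcal C(M)\}$. \emph{Step 3:} apply $\varphi$: it fixes each member of the first family and sends $(C'\setminus e)\cup\{x\}$ to $C'$, so $\varphi(\mathcal C(N))$ is the union of all circuits of $M$ disjoint from $S$ with all circuits of $M$ containing $S$, which by (ii) is exactly $\mathcal C(M)$. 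Hence $M\cong N=\widetilde M\oplus_2 C$ with $C$ an $(m+1)$-circuit containing $e$.

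The only genuinely delicate point is the minimality bookkeeping in Step 1: one must check that the displayed union really is the antichain of minimal members of $\{C'\setminus(S\setminus e):C'\in\mathcal C(M)\}$, not merely a subfamily. This reduces to two short verifications --- that no circuit of $M$ disjoint from $S$ can sit inside a set $(C'\setminus S)\cup\{e\}$ (it would then lie in $C'\setminus S\subsetneq C'$, contradicting minimality of the circuit $C'$) and conversely, and that the assignment $C'\mapsto(C'\setminus S)\cup\{e\}$ both preserves and reflects inclusion on the circuits $C'\supseteq S$, so that, circuits forming an antichain, no two of the resulting sets become comparable. A second, minor point: if $S$ happens to be a circuit of $M$ (for instance if $M$ is itself an $m$-circuit), then $e$ is a loop of $\widetilde M$ and one needs the harmless convention extending \eqref{eq:2sum0} to that degenerate case; this does not arise in the applications in \Cref{sec3}, where $M$ is minimally connected with strictly more than $m$ elements, so I would simply note it and proceed with the main case.
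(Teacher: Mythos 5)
Your proof is correct and follows essentially the same route as the paper: describe $\mathcal{C}(\widetilde{M})$ via the series-class dichotomy (every circuit of $M$ either misses $S$ or contains $S$), substitute into the $2$-sum circuit formula \eqref{eq:2sum0}, and match the result against $\mathcal{C}(M)$. You make explicit a few steps the paper compresses --- the bijection $\varphi$, the minimality bookkeeping in Step~1, and the degenerate case where $S$ is itself a circuit of $M$ (so $e$ becomes a loop of $\widetilde{M}$) --- but the argument is structurally identical to the paper's.
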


\begin{proof}
 By a slight abuse of notation we identify $C$ with its ground set. Then we may write $C=S'\cup e$, where $|S'|=|S|$. Notice that the collection $\mathcal{C}(\widetilde{M})$ of circuits of $\widetilde{M}$ consists of the minimal nonempty members of $\mathcal{D}:=\{D-(S-e): D\in \mathcal{C}(M)\}$; see \cite[Proposition 3.1.11]{Ox11}. Since $S$ is a series class, either $D\cap S=\emptyset$ or $S\subseteq D$ for every $D\in \mathcal{C}(M)$. Hence, all members of $\mathcal{D}$ are minimal and nonempty. This yields
$$\mathcal{C}(\widetilde{M})=\mathcal{D}=\{D: D\in \mathcal{C}(M),D\cap S=\emptyset\}\cup \{D-(S-e)~:~ D\in \mathcal{C}(M),S\subseteq D\}.$$Now by \eqref{eq:2sum0},
$$
  \begin{aligned}
   \mathcal{C}(\widetilde{M}\oplus_2 C)&=\mathcal{C}(\widetilde{M}-e)\cup \mathcal{C}(C-e)\cup\{C\cup D-S~:~ D\in \mathcal{C}(M),S\subseteq D\}\\
                                                          &=\{D~:~ D\in \mathcal{C}(M),D\cap S=\emptyset\}\cup \{S'\cup (D-S)~:~ D\in \mathcal{C}(M),S\subseteq D\}.
  \end{aligned}
$$
It then follows readily that $M\cong \widetilde{M}\oplus_2 C$, as desired.
\end{proof}

\begin{example}\label{ex}
	Let $M$ be the cycle matroid of the complete bipartite graph $K_{2,3}$, with the edges labelled as in \Cref{fig0}(a). Then $S=\{1,2\}$ is a series class of $M$. The 2-sum of $\widetilde{M}=M/\{1\}$ and the 3-circuit $C=\{2,1',2'\}$, which is the cycle matroid of the graph depicted in \Cref{fig0}(d), is clearly isomorphic to $M$. 
\end{example}

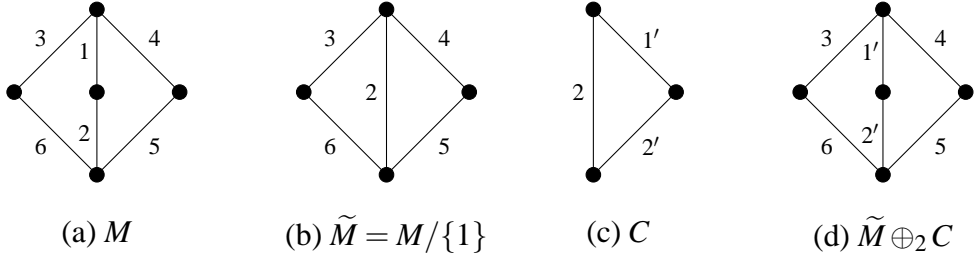
\begin{figure}[htb]
	
	\begin{tikzpicture}[scale=1.1]
	\draw  (1.,2.)-- (0.,1.);
	\draw  (0.,1.)-- (1.,0.);
	\draw  (1.,0.)-- (2.,1.);
	\draw  (2.,1.)-- (1.,2.);
	\draw  (1.,2.)-- (1.,0.);
	
	\draw  (4.5,2.)-- (5.5,1.);
	\draw  (5.5,1.)-- (4.5,0.);
	\draw  (4.5,0.)-- (3.5,1.);
	\draw  (3.5,1.)-- (4.5,2.);
	\draw  (4.5,2.)-- (4.5,0.);
	
	\draw  (7.,0.)-- (7.,2.);
	\draw  (7.,0.)-- (8.,1.);
	\draw  (8.,1.)-- (7.,2.);

	\draw  (10.5,2.)-- (9.5,1.);
	\draw  (9.5,1.)-- (10.5,0.);
	\draw  (10.5,0.)-- (11.5,1.);
	\draw  (11.5,1.)-- (10.5,2.);
	\draw  (10.5,2.)-- (10.5,0.);
	
	\draw (1.,-0.72) node {(a) $M$};
	\draw (4.5,-0.72) node {(b) $\widetilde{M}=M/\{1\}$};
	\draw (7.3,-0.72) node {(c) $C$};
	\draw (10.5,-0.72) node {(d) $\widetilde{M}\oplus_2 C$};
	
	\begin{scriptsize}
	\draw [fill=black] (1.,2.) circle (2.5pt);
	\draw [fill=black] (0.,1.) circle (2.5pt);
	\draw [fill=black] (1.,0.) circle (2.5pt);
	\draw [fill=black] (2.,1.) circle (2.5pt);
	\draw [fill=black] (1.,1.) circle (2.5pt);
	
	\draw (0.85,1.5) node {$1$};
	\draw (0.85,0.5) node {$2$};
	\draw (0.32,1.65) node {$3$};
	\draw (0.32,0.35) node {$6$};
	\draw (1.7,0.35) node {$5$};
	\draw (1.7,1.65) node {$4$};

	\draw [fill=black] (4.5,2.) circle (2.5pt);
	\draw [fill=black] (5.5,1.) circle (2.5pt);
	\draw [fill=black] (4.5,0.) circle (2.5pt);
	\draw [fill=black] (3.5,1.) circle (2.5pt);
	
	\draw (5.2,1.65) node {$4$};
	\draw (5.2,0.35) node {$5$};
	\draw (3.82,0.35) node {$6$};
	\draw (3.82,1.65) node {$3$};
	\draw (4.32,1.) node {$2$};

	\draw [fill=black] (7.,0.) circle (2.5pt);
	\draw [fill=black] (8.,1.) circle (2.5pt);
	\draw [fill=black] (7.,2.) circle (2.5pt);
	
	\draw (6.82,1) node {$2$};
	\draw (7.7,0.35) node {$2'$};
	\draw (7.7,1.65) node {$1'$};

	\draw [fill=black] (10.5,1.) circle (2.5pt);
	\draw [fill=black] (10.5,0.) circle (2.5pt);
	\draw [fill=black] (10.5,2.) circle (2.5pt);
	\draw [fill=black] (11.5,1.) circle (2.5pt);
	\draw [fill=black] (9.5,1.) circle (2.5pt);
	
	\draw (10.35,1.5) node {$1'$};
	\draw (10.35,0.5) node {$2'$};
	\draw (9.82,1.65) node {$3$};
	\draw (9.82,0.35) node {$6$};
	\draw (11.2,0.35) node {$5$};
	\draw (11.2,1.65) node {$4$};
		
	\end{scriptsize}
	\end{tikzpicture}
	\caption{$M\cong \widetilde{M}\oplus_2 C$}
	\label{fig0}
\end{figure}

By iterating, the operation of parallel connection can be defined for special families of more than two matroids. Let $M_1,\ldots,M_n$ be matroids on ground sets $E_1,\ldots,E_n$ such that $E_{i+1}\cap(\bigcup_{j=1}^i E_j)=\{e_i\}$ for $i=1,\ldots,n-1$. Here, $e_1,\ldots,e_{n-1}$ need not be distinct. Assume further that each $e_i$ is neither a loop nor a coloop of the matroids containing it. Then we can form $P(M_1,M_2)$, $P(P(M_1,M_2),M_3)$, and so on. The last matroid obtained in this way, denoted by $P(M_1,\ldots,M_n)$, is called the \emph{parallel connection} of $M_1,\ldots,M_n$ with respect to $e_1,\ldots,e_{n-1}$. 

Assume $M$ is a connected matroid on $E$. Then $M$ is called \emph{parallel irreducible at $e\in E$} if either $|E|=1$ or $M$ is not a parallel connection of two smaller matroids with respect to $e$. We say that $M$ is \emph{parallel irreducible} if it is parallel irreducible at every element of $E$. The following result, which was essentially proved by Brylawski \cite[Propositions 5.8, 5.9]{Bry71} (see also \cite[Lemma 2.1]{Le16}), indicates that in certain matroid arguments the general result can be obtained by restricting attention to the parallel irreducible case. 

\begin{lemma}\label{pr21}
 Let $M$ be a connected matroid on the ground set $E$. Then the following statements hold:
\begin{enumerate}
 \item 
If $M=P(M_1,M_2)$ with respect to $e$, then $M/e$ is disconnected: $$M/e=M_1/e\oplus M_2/e.$$ Conversely, if $M/e$ is disconnected, then $M$ is a parallel connection of two smaller matroids with respect to $e$. Hence, $M$ is parallel irreducible if and only if $M/e$ is connected for every $e\in E$.

\item 
$M$ admits a decomposition $M=P(M_1,\ldots,M_n)$, where each $M_i$ is connected and parallel irreducible. 
\end{enumerate}
\end{lemma}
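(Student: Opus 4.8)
The plan is to prove (i) first, as it contains all the real content, and then obtain (ii) as a formal consequence of it together with the standard fact that a parallel connection is connected precisely when both of its factors are. For the forward implication of (i), suppose $M=P(M_1,M_2)$ with respect to $e$. I would compute $\mathcal{C}(M/e)$ directly from the circuit description of a contraction — the circuits of $M/e$ are the minimal nonempty members of $\{C-e:C\in\mathcal{C}(M)\}$ — applied to the three families in the defining formula for $\mathcal{C}(P(M_1,M_2))$. A circuit of $M_i$ missing $e$ survives unchanged; a circuit of $M_i$ through $e$ yields $C-e$; and a mixed circuit $C_1\cup C_2-e$ yields $(C_1-e)\cup(C_2-e)$. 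Since forming the parallel connection requires $e$ to be neither a loop nor a coloop of $M_i$, there is a circuit of $M_i$ through $e$, so $C_i-e$ is nonempty and already dependent in $M_i/e$; hence every mixed contribution strictly contains a circuit of $M_1/e$ and is never minimal. The surviving sets are exactly $\mathcal{C}(M_1/e)\cup\mathcal{C}(M_2/e)$, and because they live on the disjoint ground sets $E_1-e$ and $E_2-e$, minimality across the two families is automatic. Thus $\mathcal{C}(M/e)=\mathcal{C}(M_1/e)\cup\mathcal{C}(M_2/e)$, i.e. $M/e=M_1/e\oplus M_2/e$; both summands are nonempty (again because $e$ is not a loop or coloop of $M_i$, forcing $|E_i|\ge2$), so $M/e$ is disconnected.

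For the converse of (i), suppose $M/e=N_1\oplus N_2$ with $N_i$ on a nonempty ground set $F_i$ and $F_1\sqcup F_2=E-e$. Since $M$ is connected with at least two elements it has no loops or coloops, so $e$ is a legitimate basepoint. I would set $M_i:=M|(F_i\cup e)$, the deletion from $M$ of the complementary part; as deletion and contraction commute, $M_i/e=(M/e)|F_i=N_i$ at once. The task is then to verify $M=P(M_1,M_2)$, that is, that $\mathcal{C}(M)$ equals $\mathcal{C}(M_1)\cup\mathcal{C}(M_2)\cup\{C_1\cup C_2-e:e\in C_i\in\mathcal{C}(M_i)\}$. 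The inclusions $\mathcal{C}(M_i)\subseteq\mathcal{C}(M)$ are immediate. The remaining inclusions — that the mixed sets are indeed circuits of $M$, and that every circuit of $M$ is of one of the three listed types — are proved by a case analysis on whether $e\in C$ and on how $C$ meets $F_1$ and $F_2$, using repeatedly that a circuit of $M/e$ lifts to a circuit $D$ or $D\cup e$ of $M$, that a set is dependent in $N_1\oplus N_2$ iff one of its two parts is, and the circuit elimination axiom (in its strong form where needed) to discard non-minimal sets; this also produces circuits of $M_i$ through $e$, so $e$ is not a coloop of $M_i$ and $P(M_1,M_2)$ is well defined. The last sentence of (i) is then immediate: $M$ is parallel irreducible at $e$ iff $|E|=1$ or $M$ is not a parallel connection of two smaller matroids at $e$; in any $M=P(M_1,M_2)$ we have $|E_i|\ge2$, so both factors are automatically smaller, and the equivalence just established says this is the same as $M/e$ being connected (with $|E|=1$ harmless, since then $M/e$ is empty). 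Quantifying over $e$ gives the stated characterization.

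Part (ii) follows by induction on $|E|$. If $M$ is parallel irreducible, take $n=1$. Otherwise $M=P(M_1,M_2)$ at some $e$ with both factors smaller; since a parallel connection is connected iff both factors are connected (a standard fact; see \cite{Ox11}), $M_1$ and $M_2$ are connected, so the inductive hypothesis decomposes each into connected parallel-irreducible pieces. Associativity of the parallel connection then splices these into a decomposition $M=P(N_1,\ldots,N_n)$ with all $N_j$ connected and parallel irreducible; the only point to check is the basepoint bookkeeping required by the definition of iterated parallel connection (each new factor meeting the union of the previous ones in a single element that is a non-loop, non-coloop of the factors containing it), which is routine because every basepoint reused in the splicing remains a non-loop, non-coloop element of the connected pieces in which it sits.

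I expect the main obstacle to be the converse direction of (i): reconstructing $M$ as $P(M_1,M_2)$ from the disconnection of $M/e$. Unlike the forward direction, which is pure bookkeeping with the circuit formula, this step needs honest circuit-elimination arguments to show that no circuit of $M$ straddles $F_1$ and $F_2$ except in the prescribed mixed fashion, and that every circuit through $e$ either stays on one side or splits accordingly. This is exactly the content of \cite[Propositions 5.8, 5.9]{Bry71}, so in the final writeup one may simply cite that together with \cite[Lemma 2.1]{Le16} rather than redoing the case analysis.
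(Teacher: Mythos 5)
Your proposal is correct, and the paper itself gives no proof of this lemma, simply citing Brylawski \cite[Propositions 5.8, 5.9]{Bry71} (see also \cite[Lemma 2.1]{Le16}). Your circuit computation for the forward direction of (i) and the induction for (ii) are sound, and for the converse of (i) --- the genuinely hard step --- you end up deferring to the same Brylawski reference the paper uses, so the two treatments are in essential agreement.
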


\subsection{Broken circuit complexes}

Let $M$ be a matroid, whose ground set $E$ is endowed with a linear order $<$. We further assume that  $M$ is loopless, since otherwise $BC(M)=\emptyset$, which is not interesting for us here.
Let $r=r(M)$. Then it is well-known that $BC(M)$ is an $(r-1)$-dimensional shellable simplicial complex; see \cite{Pr77} or \cite[7.4]{Bj92}. Let $f(M)=(f_0(M),\ldots,f_r(M))$ be the $f$-vector of $BC(M)$, where $f_i(M)$ is the number of faces of $BC(M)$ of cardinality $i$. Notice that $f(M)$ is independent of the chosen order $<$, as is easily seen from the Whitney--Rota formula \eqref{WR}.
Define the \emph{$h$-vector} $h(M)=(h_0(M),\ldots,h_r(M))$ and the \emph{$h$-polynomial} (or \emph{shelling polynomial}) $h(M;t)=\sum_{i=0}^rh_i(M)t^{r-i}$ of $BC(M)$ by the polynomial identity $h(M;t)=(-1)^r\chi(M;1-t)$. Thus, the $f$-vector and the $h$-vector of $BC(M)$ are correlated as follows
\[f_i(M)=\sum_{j=0}^i\binom{r-j}{i-j}h_j(M) \ \ \text{and}\ \
 h_i(M)=\sum_{j=0}^i(-1)^{i-j}\binom{r-j}{i-j}f_j(M),\ \ i=0,\ldots,r.\]
In the sequel, for convenience, we make the convention that $h_i(M)=0$ for $i<0$ or $i>r$. Moreover, when it is clear from the context which matroid we are referring to, we will just write $h_i$ instead of $h_i(M)$. 

Note that both $\chi(M;t)$ and $h(M;t)$ are, up to sign, evaluations of the \emph{Tutte polynomial} $T(M;x,y)$ of $M$, which is defined by
\[T(M;x,y)=\sum_{X\subseteq E}(x-1)^{r(E)-r(X)}(y-1)^{|X|-r(X)}.\]
Evidently, $\chi(M;t)=(-1)^rT(M;1-t,0)$. Hence, $h(M;t)=T(M;t,0)$. 

 For later usage we collect here several basis properties of the $h$-polynomial of $BC(M)$. They follow easily from the corresponding properties of the Tutte polynomial of $M$; see \cite[6.2]{BO92} and \cite[p. 182]{Bry82}.

\begin{lemma}\label{hpol}
 Let $M$ be a loopless matroid of rank $r$ on the ground set $E$. Let $h(M;t)=\sum_{i=0}^rh_it^{r-i}$ be the $h$-polynomial of $BC(M)$. Then the following statements hold:
\begin{enumerate}
 \item
$h_i\geq 0$ for $i=0,\ldots,r$. Moreover, if $M$ has $c$ connected components, then $r-c$ is the largest index $i$ such that $h_{i}\ne0$. 

\item (Deletion-contraction)
Suppose $|E|\geq 2$ and $e\in E$. Then
\[h(M;t)=
\begin{cases}
 th(M-e;t)  &\text{ if } e \text{ is a coloop of } M,\\
h(M-e;t)+h(M/e;t) & \text{ otherwise}.
\end{cases}
\]
Thus, in particular, if $M$ is connected, then either $M-e$ or $M/e$ is connected.

\item
If $M$ is an $(r+1)$-circuit, then $h(M;t)=t^r+t^{r-1}+\cdots+t$ .

\item
Assume that $M$ is either the direct sum or the parallel connection of two matroids $M_1$ and $M_2$. Then
\[h(M;t)=
\begin{cases}
 h(M_1;t)h(M_2;t)& \text{ if } M=M_1\oplus M_2,\\
 t^{-1}h(M_1;t)h(M_2;t)& \text{ if } M=P(M_1,M_2).
\end{cases}
\]

 \end{enumerate}
\end{lemma}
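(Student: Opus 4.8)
The plan is to derive all four items from the two identities recorded just above, $h(M;t)=T(M;t,0)=(-1)^{r}\chi(M;1-t)$, by specialising at $y=0$ the standard recursive and multiplicative behaviour of the Tutte polynomial and, for the parallel connection, of the characteristic polynomial; every input fact used below is classical and can be found in \cite[Section~6.2]{BO92} and \cite[p.~182]{Bry82}.

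For (i), nonnegativity of the $h_i$ holds because the Tutte polynomial has nonnegative coefficients, hence so does $T(M;t,0)$; alternatively one may invoke the shellability of $BC(M)$ recorded above. For the last nonzero entry, note that $h_0=f_0(M)=1$, so $h(M;t)$ is a nonzero polynomial, and the assertion is equivalent to saying that $t^{c}$ is the lowest power of $t$ occurring in $h(M;t)$. Decomposing $M=M_1\oplus\cdots\oplus M_c$ into connected components and using the direct-sum case of (iv) below, the order of vanishing of $h(M;t)$ at $t=0$ equals the sum of those of the $h(M_i;t)$, so it suffices to show that for a connected loopless $M$ this order is exactly $1$: indeed $T(M;0,0)=0$ for every matroid with nonempty ground set, while the coefficient of $t$ in $T(M;t,0)$ is the beta invariant of $M$, which is strictly positive precisely because $M$ is connected.

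For (ii), one specialises the deletion--contraction recursion at $y=0$: $T(M;x,y)=x\,T(M-e;x,y)$ when $e$ is a coloop (and then $M-e=M/e$), and $T(M;x,y)=T(M-e;x,y)+T(M/e;x,y)$ when $e$ is neither a loop nor a coloop, the remaining (loop) case not arising since $M$ is loopless. When $M/e$ happens to have a loop, $BC(M/e)=\emptyset$ and $T(M/e;t,0)=0$, so the formula stays valid with our conventions. The closing assertion of (ii) follows from (i): if $M$ is connected the coefficient of $t$ in $h(M;t)$ is positive and $e$ is not a coloop, so this coefficient is positive in $h(M-e;t)$ or in $h(M/e;t)$, whence by (i) the corresponding minor must be connected.

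For (iii), I would induct on $r$ using (ii) and (iv): for an $(r+1)$-circuit $M$ and any $e\in E$, the matroid $M-e$ is the free matroid on $r$ elements, with $h(M-e;t)=t^{r}$, while $M/e$ is an $r$-circuit, and the base case $U_{1,2}$ has $h=t$; this yields $h(M;t)=t^{r}+t^{r-1}+\cdots+t$. (One could instead simply specialise the well-known Tutte polynomial $x^{r}+\cdots+x+y$ of a circuit at $y=0$.) For (iv), the direct-sum formula is the $y=0$ specialisation of $T(M_1\oplus M_2)=T(M_1)T(M_2)$; for the parallel connection I would combine the classical identity $\chi(P(M_1,M_2);t)=\chi(M_1;t)\chi(M_2;t)/(t-1)$ with the rank formula $r(P(M_1,M_2))=r(M_1)+r(M_2)-1$ and substitute into $h(M;t)=(-1)^{r}\chi(M;1-t)$, the factor $t^{-1}$ arising exactly from the combination of this rank drop with the shift $t\mapsto1-t$. (Alternatively, the parallel-connection case can be obtained by induction on $|E_1|+|E_2|$ from (ii), the identity $P(M_1,M_2)/e=M_1/e\oplus M_2/e$ of \Cref{pr21}, and the direct-sum case.) The only genuinely delicate point in all of this is the bookkeeping: in (ii) one must notice that the combinatorial object $BC(M/e)$ becomes void exactly when the algebraic specialisation $T(M/e;t,0)$ vanishes, so that the recursion survives in that degenerate case; and in (iv) one must keep careful track of the rank drop in the parallel connection, since that is where the $t^{-1}$ originates. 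Beyond these checks, every step is a routine transcription of a known property of the Tutte or characteristic polynomial.
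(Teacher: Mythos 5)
Your proposal is correct and follows exactly the route the paper intends: the paper gives no proof beyond citing the standard Tutte/characteristic polynomial facts in \cite[6.2]{BO92} and \cite[p.~182]{Bry82}, and your argument simply specializes those facts (deletion--contraction, multiplicativity over direct sums, the Tutte polynomial of a circuit, Crapo's beta-invariant criterion for connectedness, and Brylawski's formula $\chi(P(M_1,M_2);\lambda)=\chi(M_1;\lambda)\chi(M_2;\lambda)/(\lambda-1)$ together with $r(P(M_1,M_2))=r(M_1)+r(M_2)-1$) at $y=0$. The two ``delicate points'' you flag (the vanishing of $T(M/e;t,0)$ when $M/e$ acquires a loop, and the rank drop producing the factor $t^{-1}$) are indeed the only bookkeeping issues, and you handle them correctly.
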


As an important step in the proof of \Cref{th32}, we will relate the $h$-vector of $BC(M)$ to the $h$-vectors of broken circuit complexes of certain minors of $M$ which are obtained from $M$ by deleting or contracting elements in a series class. For this, the following simple facts will be necessary.

\begin{lemma}\label{lm21}
 Let $S=\{e_1,\ldots,e_m\}$ be a series class of a loopless matroid $M$. For $0\le j\le m-1$, set $M_j=M/\{e_1,\ldots,e_j\}$ and $S_j=\{e_{j+1},\ldots,e_m\}$. Then the following statements hold:
\begin{enumerate}
 \item $r(M-S)=r(M)-m+1$.

\item $r(M_j)=r(M)-j$, and if $M$ is connected, so is $M_j$.

 \item $S_j$ is a series class of $M_j$ and $M_j-S_j= M-S$.

\item For every $e\in S$ and $e'\in S_j$ the $h$-vectors of the broken circuit complexes of the matroids $M-e,\ M-S$ and $M_j-e'$ coincide.
\end{enumerate}

\end{lemma}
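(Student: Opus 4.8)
The four statements rest on two facts about the series class $S$ recalled in the preliminaries: \emph{(a)} every circuit $D$ of $M$ satisfies $D\cap S=\emptyset$ or $S\subseteq D$; and \emph{(b)} $S$, and hence each of its subsets $\{e_1,\ldots,e_j\}$, is independent in $M$ --- this is immediate from \Cref{lm:2sum}, whose isomorphism identifies $S$ with a proper subset of the ground set of an $(m+1)$-circuit. From these I will extract one observation and use it throughout: \emph{if $e\in S$ and $X$ is a set of elements of $M$ with $e\notin X$ but $X\cap S\neq\emptyset$, then $e$ is a coloop of $M-X$.} Indeed the circuits of $M-X$ are exactly the circuits of $M$ that avoid $X$, and by (a) such a circuit through $e$ would have to contain all of $S$, in particular the element of $S$ lying in $X$, which is impossible. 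Since a coloop can be contracted instead of deleted without changing the matroid, this lets me swap contractions of elements of $S$ for deletions.

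I would prove (iii) first. Dualising the identity $(M/X)^*=M^*-X$ shows that the cocircuits of $M_j=M/\{e_1,\ldots,e_j\}$ are precisely the cocircuits of $M$ that avoid $\{e_1,\ldots,e_j\}$; hence any two distinct elements of $S_j$ form a $2$-cocircuit of $M_j$, and, applying the same description to singletons, $S_j$ contains no coloop of $M_j$. So $S_j$ lies in a series class of $M_j$; were that series class strictly larger, an extra element $g$ would satisfy that $\{g,e_m\}$ is a $2$-cocircuit of $M_j$, hence a $2$-cocircuit of $M$, so $g$ would lie in the series class $S$ of $M$ and, avoiding $\{e_1,\ldots,e_j\}$, in $S_j$ --- a contradiction. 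Thus $S_j$ is a series class of $M_j$. For the equality, since deletion and contraction commute, $M_j-S_j=(M-\{e_{j+1},\ldots,e_m\})/\{e_1,\ldots,e_j\}$; contracting $e_1,\ldots,e_j$ one at a time, at each stage the already-deleted set contains $e_m\in S$ but not the current element, so by the observation that element is a coloop and its contraction coincides with its deletion. Hence $M_j-S_j=M-S$. Now (i) follows: $e\in S$ is not a coloop of $M$, so $r(M-e)=r(M)$, while $M-S=(M-e)-(S-e)$ is obtained from $M-e$ by deleting the $m-1$ elements of $S-e$, each a coloop when deleted (again by the observation); hence $r(M-S)=r(M)-(m-1)=r(M)-m+1$.

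For (ii), independence of $\{e_1,\ldots,e_j\}$ gives $r(M_j)=r(M)-j$, and the same independence together with (a) shows $M_j$ is loopless. For connectedness I would use \Cref{lm:2sum} twice: it presents $M$ as $M_{m-1}\oplus_2 C$ with $C$ a circuit, so $M$ connected forces $M_{m-1}$ connected since a $2$-sum is connected if and only if both its parts are (see \cite{Ox11}; all matroids occurring here have at least two elements, as $S$ is independent in the loopless matroid $M$ and so $S\neq E$); and, using (iii), it likewise presents $M_j$ as $M_{m-1}\oplus_2 C_j$ with $C_j$ a circuit, whence $M_j$ is connected. Finally, for (iv): applying the deletion--contraction rule of \Cref{hpol}(ii) --- deleting a coloop multiplies the $h$-polynomial by $t$ --- repeatedly to $M-S=(M-e)-(S-e)$, and analogously to $M_j-e'$ via (iii), gives $h(M-e;t)=t^{m-1}h(M-S;t)$ and $h(M_j-e';t)=t^{m-j-1}h(M-S;t)$. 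Since the $h$-vector is read off the $h$-polynomial by taking coefficients from the top degree downward, and that top degree rises by exactly the number of coloops deleted, multiplication by a power of $t$ leaves the $h$-vector unchanged; therefore the $h$-vectors of $BC(M-e)$, $BC(M-S)$ and $BC(M_j-e')$ all coincide.

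The one step going beyond elementary circuit/cocircuit bookkeeping built on the single observation above is the connectedness clause of (ii), which relies on \Cref{lm:2sum} (and, through (iii), on $S_j$ being a series class of $M_j$) together with the standard behaviour of connectivity under $2$-sums. I expect that to be the only place worth dwelling on, though it is routine.
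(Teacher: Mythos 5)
Your argument follows the same basic idea as the paper's---that once one element of a series class $S$ is deleted, the remaining elements become coloops, so contraction of elements of $S$ can be traded for deletion, and the $h$-polynomial picks up only a power of $t$. Parts (i), (iii) and (iv) are essentially the paper's proof (the paper also deduces (i) from the coloop observation, proves (iii) by the same commutation of deletion and contraction on coloops, and gets (iv) from $h(M-e;t)=t^{m-1}h(M-S;t)$, $h(M_j-e';t)=t^{m-j-1}h(M_j-S_j;t)$); your duality computation that the cocircuits of $M_j$ are exactly the cocircuits of $M$ avoiding $\{e_1,\ldots,e_j\}$ is a nice explicit way to see that $S_j$ is a series class of $M_j$, where the paper simply says it is ``easy to see.''

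The genuine divergence, and the one place there is a gap, is the connectedness claim in (ii). The paper argues inductively: $M_{j-1}-e_j$ has coloops (the elements of $S_j$), hence is disconnected, and then \Cref{hpol}(ii) forces $M_j=M_{j-1}/e_j$ to be connected whenever $M_{j-1}$ is. You instead invoke \Cref{lm:2sum} to write $M\cong M_{m-1}\oplus_2 C$ and $M_j\cong M_{m-1}\oplus_2 C_j$, appealing to the fact that connectivity passes between a $2$-sum and its parts. For this you assert as fact (b) that ``$S$ is independent in the loopless matroid $M$ and so $S\neq E$.'' Both halves of that claim are false when $M$ is an $m$-circuit: then $S=E$ is a series class of $M$ (any two elements of a circuit form a $2$-cocircuit), $S$ is dependent, $\widetilde M=M/(S-e)$ is a single loop, so $e$ is a loop of $\widetilde M$ and the parallel connection in \Cref{lm:2sum} is not even defined. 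In that case the conclusion of (ii) still holds trivially ($M_j$ is a smaller circuit, or a single loop, both connected), but your argument does not reach it. The standard statement ``a $2$-sum is connected iff its parts are'' also requires each part to have at least three elements, which again fails in exactly this degenerate range. The fix is short---treat the case where $M$ is a circuit separately, or simply use the paper's deletion--contraction induction for connectedness---but as written the proposal has a hole precisely at this edge case, and the independence of $\{e_1,\ldots,e_j\}$ should be justified directly from $j\le m-1$ (any dependent subset of $S$ would contain a circuit, hence all of $S$, forcing $j\ge m$) rather than from the false claim that $S$ itself is independent.
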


\begin{proof}
 (i) Since, by definition, $e_1$ is not a coloop of $M$, we have that $r(M-e_1)=r(M)$; see, e.g., \cite[3.1.5]{Ox11}. Now as every element of $S_1$ is a coloop of $M-e_1$, it holds that
$$r(M-S)=r((M-e_1)-S_1)=r(M-e_1)-|S_1|=r(M)-m+1.$$

(ii) As $M_j=M_{j-1}/e_j$ and $e_j$ is not a loop of $M_{j-1}$, we have $r(M_j)=r(M_{j-1})-1$; see, e.g., \cite[3.1.7]{Ox11}. In addition, $M_{j-1}-e_j$ is not connected since every element of $S_j$ is a coloop of this matroid. Hence, by \Cref{hpol}(ii), $M_j$ is connected if $M_{j-1}$ is so. The assertion now follows by induction.

(iii) By definition, it is easy to see that $S_j$ is a series class of $M_j$. Now since $e_1,\ldots,e_j$ are coloops of $M-S_j$, it follows from \cite[Corollary 3.1.25]{Ox11} that
$$M_j-S_j=(M-S_j)/\{e_1,\ldots,e_j\}=(M-S_j)-\{e_1,\ldots,e_j\} =M-S.$$

(iv) Since the elements of $S-e$ are coloops of $M-e$, \Cref{hpol}(ii) yields $h(M-e;t)=t^{m-1}h(M-S;t)$. Similarly, $h(M_j-e';t)=t^{m-j-1}h(M_j-S_j;t)$. As $M-S=M_j-S_j$ by (iii), the assertion follows.
\end{proof}

\section{Flawlessness of $h$-vectors of broken circuit complexes}\label{sec3}

This section is devoted to the proof of \Cref{th32} and its applications. We begin with the following lemma, which is essential for reducing the proof of \Cref{th32} to the case of minimally connected matroids. Recall that a sequence $(a_0,a_1,\ldots,a_n)$ is \emph{symmetric} if $a_i=a_{n-i}$ for $0\le i\le n$. Let us say that a polynomial $a_0t^{n+u}+a_1t^{n+u-1}+\cdots +a_nt^u$ with $a_0, a_n\ne 0$ and $u\ge0$ has a certain property (such as symmetric, unimodal or strongly {flawless}) if its coefficient sequence $(a_0,a_1,\ldots,a_n)$ has that property. 

\begin{lemma}\label{red}
 If $\varphi(t)$ and $\psi(t)$ are strongly flawless polynomials with nonnegative coefficients, then so is their product.
\end{lemma}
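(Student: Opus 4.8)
The plan is to reduce the product statement to a purely combinatorial inequality between coefficients and then prove it by a double-summation manipulation. Write $\varphi(t)=\sum_{i=0}^{a}\varphi_i t^{a-i}$ and $\psi(t)=\sum_{j=0}^{b}\psi_j t^{b-j}$ with $\varphi_i,\psi_j\ge 0$, $\varphi_0\varphi_a\psi_0\psi_b\ne 0$, and set $\chi(t)=\varphi(t)\psi(t)=\sum_{k=0}^{a+b}c_k t^{a+b-k}$, so $c_k=\sum_{i+j=k}\varphi_i\psi_j$. (We may ignore the common factor $t^u$ throughout, since multiplying by $t^u$ does not change the coefficient sequence.) Since $\varphi_a,\psi_b\ne 0$ and all coefficients are nonnegative, $c_0,c_{a+b}\ne 0$, so the length of $\chi$ is exactly $a+b$ and $c_k\ge 0$. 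It remains to show that $c_k\le c_\ell$ whenever $0\le k\le \ell\le a+b-k$; recall this is precisely the strongly flawless condition rephrased as in the introduction.

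The first step is to observe that it suffices to prove $c_k\le c_{k+1}$ for $0\le k\le (a+b)/2-1$ together with the symmetry-type bound $c_k\le c_{a+b-k}$ for $0\le k\le (a+b)/2$; indeed, once we know $c_0\le c_1\le\cdots\le c_{\lfloor (a+b)/2\rfloor}$ and $c_k\le c_{a+b-k}$, the full chain $c_k\le c_\ell$ for $k\le \ell\le a+b-k$ follows: if $\ell\le \lfloor(a+b)/2\rfloor$ use monotonicity, and if $\ell>\lfloor(a+b)/2\rfloor$ then $a+b-\ell<\ell$ and $k\le a+b-\ell$, so $c_k\le c_{a+b-\ell}\le c_\ell$ by monotonicity and then flawlessness of $\chi$ applied at index $a+b-\ell$. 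So I will establish (1) $c_k\le c_{a+b-k}$ for all $k\le (a+b)/2$, and (2) $c_k\le c_{k+1}$ for $k< (a+b)/2$.

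For (1), the idea is to exploit the flawlessness of $\varphi$ and $\psi$ coordinatewise: $c_{a+b-k}-c_k=\sum_{i+j=a+b-k}\varphi_i\psi_j-\sum_{i+j=k}\varphi_i\psi_j$, and by reindexing $i\mapsto a-i$, $j\mapsto b-j$ in the first sum one gets $\sum_{i+j=k}(\varphi_{a-i}\psi_{b-j}-\varphi_i\psi_j)$. The natural approach is to pair terms and use that $\varphi$ and $\psi$ are both flawless (so $\varphi_i\le\varphi_{a-i}$ and $\psi_j\le\psi_{b-j}$ in the relevant ranges), being slightly careful about the terms where $i>a$ or $j>b$ (which vanish) and about the middle. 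Writing it as a sum of products of the form $(\varphi_{a-i}-\varphi_i)\psi_{b-j}+\varphi_i(\psi_{b-j}-\psi_j)$ and checking each summand is nonnegative when $i+j=k\le(a+b)/2$ should work; the only delicate point is ensuring the index ranges are consistent, i.e. that when $i+j\le (a+b)/2$ one can arrange $i\le a/2$ or $j\le b/2$ appropriately. For (2), the main obstacle, I would similarly write $c_{k+1}-c_k=\sum_{i+j=k+1}\varphi_i\psi_j-\sum_{i+j=k}\varphi_i\psi_j$ and try to express it as a nonnegative combination using the monotonicity of $\varphi$ up to $\lfloor a/2\rfloor$ and of $\psi$ up to $\lfloor b/2\rfloor$, again splitting according to whether indices lie in the increasing part or the (flawless-controlled) tail. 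I expect step (2) — combining the monotone part of one factor with the merely-flawless tail of the other, uniformly across all relevant $k$ — to be the genuinely tricky bookkeeping; one clean way around it may be to induct on $\deg\varphi+\deg\psi$, peeling off a factor $(1+t)$ or a monomial when possible and reducing to smaller cases, but I would first attempt the direct telescoping estimate since it is more transparent.
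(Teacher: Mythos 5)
Your reduction of the strongly flawless condition to (1) the ``flawless'' inequalities $c_k\le c_{a+b-k}$ and (2) monotonicity $c_k\le c_{k+1}$ up to the middle is logically fine, but the proposed termwise verification of (1) does not work, and I don't see a workable route to (2) either. For (1), the claim that each summand $\varphi_{a-i}\psi_{b-j}-\varphi_i\psi_j$ (for $i+j=k\le (a+b)/2$) is nonnegative is simply false. Take $\varphi=(1,1,100)$ (so $a=2$) and $\psi=(5,5,5,5,6)$ (so $b=4$); both are strongly flawless with nonnegative coefficients. For $k=2\le(a+b)/2=3$, the summand at $(i,j)=(2,0)$ is
\[
\varphi_{a-2}\psi_{b-0}-\varphi_2\psi_0=\varphi_0\psi_4-\varphi_2\psi_0=1\cdot 6-100\cdot 5=-494<0,
\]
and likewise your decomposition $(\varphi_{a-i}-\varphi_i)\psi_{b-j}+\varphi_i(\psi_{b-j}-\psi_j)$ has the first term equal to $(1-100)\cdot 6=-594$. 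The total $c_4-c_2$ is of course nonnegative ($511-510=1$), but only after cancellation across several $(i,j)$; a term-by-term estimate cannot establish this. The issue is precisely the configuration you flagged as ``delicate'': when $i>a/2$ (forcing $j<b/2$), flawlessness of $\varphi$ pushes the wrong way and flawlessness of $\psi$ need not compensate within that single term. For (2), a direct termwise estimate faces the same obstruction, and unimodality (even just up to the middle) of a product of unimodal sequences is not a termwise fact. Your suggested fallback of inducting on $\deg\varphi+\deg\psi$ by ``peeling off $(1+t)$ or a monomial'' is not available in general, since a strongly flawless polynomial need not factor that way.

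The paper's proof avoids this entirely by a different induction: it measures how far $\varphi$ and $\psi$ are from being symmetric, namely $d_{\varphi,\psi}=\#\{i\le\lfloor n/2\rfloor: a_i<a_{n-i}\}+\#\{j\le\lfloor m/2\rfloor: b_j<b_{m-j}\}$. When $d_{\varphi,\psi}=0$, both are symmetric, and for symmetric polynomials strongly flawless is the same as unimodal; the product of symmetric unimodal polynomials with nonnegative coefficients is symmetric and unimodal (Stanley), hence strongly flawless. In the inductive step one replaces $\varphi$ by $\overline{\varphi}(t)=\varphi(t)+(a_k-a_{n-k})t^k$ at the smallest ``bad'' index $k$; this lowers the deficiency by one, preserves strong flawlessness, and a short computation shows $c_i\ge c_i'$ with $c_j-c_i\ge c_j'-c_i'+(a_{n-k}-a_k)(b_{j+k-n}-b_{i+k-n})$, where the last parenthesis is nonnegative by flawlessness of $\psi$. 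That single global modification sidesteps the cross-term cancellations that defeat the termwise approach.
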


\begin{proof}
By definition, a polynomial is strongly flawless if and only if its product with any power $t^u$ ($u\ge0$) is so. Hence without loss of generality we may assume that $\varphi(t)$ and $\psi(t)$ have the following form:
$$\begin{aligned}
   \varphi(t)&=a_0t^{n}+a_1t^{n-1}+\cdots +a_{n-1}t+a_n,\\
   \psi(t)&=b_0t^{m}+b_1t^{m-1}+\cdots +b_{m-1}t+b_m,
  \end{aligned}
$$
where $a_0,a_n,b_0,b_m> 0$. We will argue by induction on
$$d_{\varphi, \psi}:=|\{0\le i\le \lfloor n/2\rfloor: a_i<a_{n-i}\}|+|\{0\le j\le \lfloor m/2\rfloor: b_j<b_{m-j}\}|.$$
If $d_{\varphi, \psi}=0$, then $\varphi(t)$ and $\psi(t)$ are symmetric polynomials. 
Observe that for a symmetric polynomial, being strongly flawless is equivalent to being unimodal. So $\varphi(t)$ and $\psi(t)$ are symmetric and unimodal. It follows that their product $\varphi(t)\psi(t)$ is also symmetric and unimodal (see, e.g., \cite[Proposition 1]{Sta89}). Thus, $\varphi(t)\psi(t)$ is  strongly flawless, and we are done in this case.

Now consider the case $d_{\varphi, \psi}>0$. We may suppose that $a_i<a_{n-i}$ for some $0\le i\le \lfloor n/2\rfloor$. Set $k:=\min\{0\le i\le \lfloor n/2\rfloor: a_i<a_{n-i}\}$. Let $\overline{\varphi}(t)$ be the polynomial obtained from $\varphi(t)$ by replacing the term $a_{n-k}t^k$ of $\varphi(t)$ with $a_{k}t^k$, i.e.,
$\overline{\varphi}(t)=\varphi(t)+(a_k-a_{n-k})t^{k}.$
Then it is readily seen that $\overline{\varphi}(t)$ is strongly flawless. Moreover, $d_{\overline{\varphi}, \psi}=d_{\varphi, \psi}-1$.  Writing $\varphi(t)\psi(t)=\sum_{i=0}^{m+n}c_it^{m+n-i}$ and $\overline{\varphi}(t)\psi(t)=\sum_{i=0}^{m+n}c'_it^{m+n-i}$, we get
\begin{equation}\label{eq:red}
 c_{i}=\sum_{u+v=i}a_ub_v
=\begin{cases}
  c'_i& \text{if }\ i< n-k\ \text{or }\ i> m+n-k,\\
c'_i+(a_{n-k}-a_k)b_{i+k-n}& \text{otherwise}.
 \end{cases}
\end{equation}
Since $a_{n-k}>a_k$ and the coefficients of $\psi(t)$ are nonnegative, it holds that $c_i\ge c'_i$ for all $i$. Now
let $0\le i\le j\le m+n-i$. We have to show that $c_i\le c_j$. Note that $c'_i\le c'_j$ by induction. So, if $i<n-k$, then it follows from \eqref{eq:red} that $c_i=c'_i\le c'_j\le c_j$. 
Now suppose $i\ge n-k$. Then $i\ge k$ since $k\le \lfloor n/2\rfloor$. Hence $j\le m+n-i \le m+n-k$. Again by \eqref{eq:red} we have
$$c_j-c_i=c'_j-c'_i+(a_{n-k}-a_k)(b_{j+k-n}-b_{i+k-n})\ge (a_{n-k}-a_k)(b_{j+k-n}-b_{i+k-n}).$$
Thus, the inequality $c_i\le c_j$ will be confirmed once we have shown that $b_{i+k-n}\le b_{j+k-n}$. But the last inequality holds since $0\le i+k-n\le j+k-n\le m-(i+k-n)$ (which follows easily from $n-k\le i\le j\le m+n-i$ and $k\le \lfloor n/2\rfloor$) and $\psi(t)$ is strongly flawless. This completes the proof.
\end{proof}

In the sequel, for our purposes, it will be convenient to consider $h$-vectors with zero entries at the end removed. So, if we say that $h(M)=(h_0(M),h_1(M),\ldots,h_s(M))$ is the $h$-vector of $BC(M)$, then $s$ is the largest index $i$ with $h_i(M)\ne 0$. In this case, recall from \Cref{hpol}(i) that $s=r-c$, where $r=r(M)$ and $c$ is the number of connected components of $M$. 

Now let $M$ be a loopless matroid and let $h(M)=(h_0(M),h_1(M),\ldots,h_s(M))$ be the $h$-vector of $BC(M)$. Define 
$$\bar{h}_i(M):=
\begin{cases}
 h_{s-i}(M)-h_i(M) &\text{for }\ 0\le i\le \lfloor s/2\rfloor,\\
0& \text{otherwise}.
\end{cases}
$$ 
Following Swartz \cite{Sw06}, we call $\bar h(M):=(\bar h_0(M),\bar h_1(M),\ldots,\bar h_{\lfloor s/2\rfloor}(M))$ the \emph{complementary $h$-vector} of $BC(M)$. For convenience we set $\bar h(M)=(0)$ if $M$ contains a loop.

The next two lemmas present two different interpretations of the complementary $h$-vector of $BC(M)$ which involve the $h$-vector of $BC(M/S)$, where $S$ is a (removable) series class of $M$. Recall our convention that $h_i(M/S)=0$ for $i<0$ or $i>r(M/S)$.

\begin{lemma}\label{series1}
 Let $M$ be a connected matroid and $S$ a non-trivial removable series class of $M$ with $|S|=m$. Let $h(M)=(h_0(M),h_1(M),\ldots,h_s(M))$ be the $h$-vector of  $BC(M)$. Then for every $e\in S$ and $0\le i\le \lfloor s/2\rfloor$,
$$\bar h_i(M)=\bar h_{i}(M/e)+\bar h_{i-m+1}(M-S)+(h_{i-m+1}(M/S)-h_{i-m}(M/S)).$$
\end{lemma}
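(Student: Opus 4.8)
The plan is to derive two polynomial identities relating $h(M;t)$ and $h(M/e;t)$ to $h(M-S;t)$ and $h(M/S;t)$, and then to extract and rearrange coefficients.

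\emph{Step 1: polynomial identities.} Write $S=\{e_1,\dots,e_m\}$ and $M_j=M/\{e_1,\dots,e_j\}$, so $M_0=M$ and $M_m=M/S$. For $0\le j\le m-1$ the element $e_{j+1}$ lies in the series class $S_j$ of $M_j$ (\Cref{lm21}(iii)), hence is not a coloop, so \Cref{hpol}(ii) gives $h(M_j;t)=h(M_j-e_{j+1};t)+h(M_{j+1};t)$; moreover in $M_j-e_{j+1}$ the $m-1-j$ elements $e_{j+2},\dots,e_m$ are coloops, and deleting them yields $M-S$ (\Cref{lm21}(iii),(iv)), so $h(M_j-e_{j+1};t)=t^{m-1-j}h(M-S;t)$. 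Telescoping over $j$ gives $h(M;t)=(1+t+\cdots+t^{m-1})h(M-S;t)+h(M/S;t)$. Running the identical argument on $M/e=M_1$ — whose removable series class $S_1=S-e$ has size $m-1$ and satisfies $(M/e)-S_1=M-S$ and $(M/e)/S_1=M/S$ — yields $h(M/e;t)=(1+t+\cdots+t^{m-2})h(M-S;t)+h(M/S;t)$. (Both identities can instead be deduced from \Cref{lm:2sum} together with \Cref{hpol}(iii),(iv) and one further application of \Cref{hpol}(ii).)

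\emph{Step 2: coefficient identities.} By \Cref{lm21}, $r(M/e)=r-1$, $r(M-S)=r-m+1$, $r(M/S)=r-m$, and $M/e$ and $M-S$ are connected, so $s(M)=r-1$, $s(M/e)=r-2$, $s(M-S)=r-m$. Matching the coefficient of $t^{r-k}$ (resp. $t^{(r-1)-k}$) in the two identities, with the convention $h_j(\,\cdot\,)=0$ for out-of-range $j$, gives
\[ h_k(M)=h_{k-m}(M/S)+\sum_{q=0}^{m-1}h_{k-q}(M-S),\qquad h_k(M/e)=h_{k-m+1}(M/S)+\sum_{q=0}^{m-2}h_{k-q}(M-S). \]

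\emph{Step 3: rearrangement.} Substitute these at $k=s-i$ and $k=i$ into $\bar h_i(M)=h_{s-i}(M)-h_i(M)$ and into $h_{(s-1)-i}(M/e)-h_i(M/e)$, and use the identity $h_{(s-m+1)-(i-m+1)}(M-S)-h_{i-m+1}(M-S)=h_{s-i}(M-S)-h_{i-m+1}(M-S)$. Then the $h(M/S)$–contributions cancel in pairs down to $h_{s-i-m}(M/S)-h_{i-m}(M/S)$, which is exactly the $M/S$–part of $\bar h_i(M)$; and the partial sums $\sum_{q=0}^{m-2}$ coming from the $M/e$–term, together with the two extra terms $h_{s-i}(M-S)$ and $h_{i-m+1}(M-S)$, reassemble into the full sums $\sum_{q=0}^{m-1}$ appearing in $\bar h_i(M)$. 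This yields the asserted identity.

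\emph{Main obstacle.} The only real work is the bookkeeping at the ends of the $h$-vectors: one must check that the truncated complementary entries $\bar h_i(M/e)$ and $\bar h_{i-m+1}(M-S)$ may be replaced by the antisymmetric quantities $h_{s(\cdot)-j}(\cdot)-h_j(\cdot)$. For $\bar h_{i-m+1}(M-S)$ this holds for every $0\le i\le\lfloor s/2\rfloor$ because $m\ge 2$ forces $i-m+1\le\lfloor(s-m+1)/2\rfloor=\lfloor s(M-S)/2\rfloor$ (and the sub-range $i<m-1$, where $i-m+1<0$, is disposed of by noting that both $h_{s-i}(M-S)$ and $h_{i-m+1}(M-S)$ vanish). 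For $\bar h_i(M/e)$ it holds whenever $i\le\lfloor s(M/e)/2\rfloor=\lfloor (s-1)/2\rfloor$, i.e. for all $i\le\lfloor s/2\rfloor$ when $s$ is odd, and for $i<\lfloor s/2\rfloor$ when $s$ is even; the remaining boundary index $i=\lfloor s/2\rfloor=s/2$ with $s$ even is degenerate — there $\bar h_i(M)=h_{s/2}(M)-h_{s/2}(M)=0$ — and is treated separately, invoking the antisymmetry $\bar h_j(N)=-\bar h_{s(N)-j}(N)$ to interpret the $M/e$–term. I expect this end-of-vector case analysis, rather than any of the algebra, to be the part requiring the most care.
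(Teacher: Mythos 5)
Your Steps~1--3 are a clean and correct route to the identity, genuinely different in flavor from the paper's argument: the paper runs an ad hoc induction on the number $j$ of contracted elements of $S$, manipulating differences $h_{s-i}-h_i$ directly and tracking a correction term at each step (its equation (3.1)); you instead isolate the two telescoped polynomial identities
$h(M;t)=(1+\cdots+t^{m-1})h(M-S;t)+h(M/S;t)$ and $h(M/e;t)=(1+\cdots+t^{m-2})h(M-S;t)+h(M/S;t)$
and then extract coefficients once, which makes the cancellation in Step~3 transparent. I checked the coefficient extraction and the rearrangement; they are right, and your handling of $\bar h_{i-m+1}(M-S)$ (including the sub-range $i<m-1$ where both $h_{s-i}(M-S)$ and $h_{i-m+1}(M-S)$ vanish, and the bound $i-m+1\le\lfloor(s-m+1)/2\rfloor$ for $m\ge 2$) is correct.

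The one genuine gap is precisely the boundary index you flag, $i=s/2$ with $s$ even, and your proposed resolution does not close it. For $N$ loopless and connected the paper's $\bar h_j(N)$ is defined to be $h_{s(N)-j}(N)-h_j(N)$ only for $j\le\lfloor s(N)/2\rfloor$ and is set to $0$ otherwise, so the ``antisymmetry'' $\bar h_j(N)=-\bar h_{s(N)-j}(N)$ you invoke simply fails for this truncated quantity unless $h(N)$ happens to be symmetric. Consequently, at $i=s/2$ the term $\bar h_{s/2}(M/e)$ in the statement equals $0$ by definition (since $s(M/e)=s-1$ and $s/2>\lfloor(s-1)/2\rfloor$), whereas your Step~3 algebra produces the antisymmetric quantity $h_{s/2-1}(M/e)-h_{s/2}(M/e)$, and these need not agree. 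In fact the lemma as literally stated is false at this index. Take $M$ to be the cycle matroid of $K_5$ with one edge subdivided and $S$ the resulting $2$-element series class; then $h(M)=(1,6,14,15,6)$, so $s=4$ and $\bar h_2(M)=0$, while $M/e\cong M(K_5)$ has $h=(1,6,11,6)$ so $\bar h_2(M/e)=0$, $M-S\cong M(K_5-e)$ has $h=(1,5,8,4)$ so $\bar h_1(M-S)=3$, and $M/S\cong M(K_5/e)$ has $h=(1,3,2)$ so $h_1(M/S)-h_0(M/S)=2$; the right-hand side is $0+3+2=5\ne 0$. Notably, the paper's own proof contains the same unflagged gap: from ``$h_{s-1}(M/e_1)\ne 0$'' it concludes ``$\bar h_i(M/e_1)=h_{s-i-1}(M/e_1)-h_i(M/e_1)$'', which by the definition of $\bar h$ holds only for $i\le\lfloor(s-1)/2\rfloor$. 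The downstream use in \Cref{th32} is unaffected because $\bar h_{s/2}(M)=0\ge 0$ is trivial there, but both your proof and the paper's should either restrict the lemma to $i\le\lfloor(s-1)/2\rfloor$, or restate it with the untruncated antisymmetric quantity $h_{s(\cdot)-i}(\cdot)-h_i(\cdot)$ in place of $\bar h_i(M/e)$.
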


\begin{proof}
If $M$ is a $2$-circuit, then the statement is easily seen to be true. So assume that $M$ is not a $2$-circuit. Suppose $S=\{e_1,\ldots,e_m\}$ with $e=e_1$. Set $M_j=M/\{e_1,\ldots,e_j\}$ for $j=1,\ldots,m$. We will show via induction that
\begin{align}
\begin{aligned}
  \bar h_i(M)  = &\ \bar h_{i}(M/e_1)+\bar{h}_{i-m+1}(M-S)+(h_{i-j+1}(M_j)-h_{i-j}(M_j))\\
                        &+(h_{i-m+1}(M-S)-h_{i-j+1}(M-S))
\end{aligned}
\label{eq31}
 \end{align}
for $j=1,\ldots,m$. The case $j=m$ then gives the desired assertion.

Using the deletion-contraction formula (\Cref{hpol}(ii)) and \Cref{lm21}(iv), we have
\begin{align}
	\begin{aligned}
   \bar h_i(M)  =&\ h_{s-i}(M)-h_i(M)\\
                        = &\ (h_{s-i}(M-e_1)+h_{s-i-1}(M/e_1))-(h_{i}(M-e_1)+h_{i-1}(M/e_1))\\
                        =&\ (h_{s-i-1}(M/e_1)-h_{i}(M/e_1))+(h_{s-i}(M-S)-h_{i-m+1}(M-S))\\
                        &+(h_{i}(M/e_1)-h_{i-1}(M/e_1))+(h_{i-m+1}(M-S)-h_{i}(M-S)).
  \end{aligned}
\label{eq32}
\end{align}
By \Cref{lm21}(ii), $M/e_1$ is connected and $r(M/e_1)=r(M)-1$. Thus, in particular, $M/e_1$ is loopless since $M$ is not a 2-circuit. So from \Cref{hpol}(i) it follows that $h_{s-1}(M/e_1)\ne 0$, and hence $\bar h_i(M/e_1)=h_{s-i-1}(M/e_1)-h_{i}(M/e_1)$. Similarly, as $M-S$ is connected and $r(M-S)=r(M)-m+1$ (see \Cref{lm21}(i)), it holds that $h_{s-m+1}(M-S)\ne 0$ and $\bar h_{i-m+1}(M-S)=h_{s-i}(M-S)-h_{i-m+1}(M-S)$. Thus \eqref{eq32} implies that \eqref{eq31} is true for $j=1$. To complete the induction argument, it suffices to show that 
$$\begin{aligned}
  &(h_{i-j+1}(M_j)-h_{i-j}(M_j))+(h_{i-m+1}(M-S)-h_{i-j+1}(M-S))\\
=&(h_{i-j}(M_{j+1})-h_{i-j-1}(M_{j+1}))+(h_{i-m+1}(M-S)-h_{i-j}(M-S)),
\end{aligned}$$
or equivalently,
$$\begin{aligned}
  h_{i-j+1}(M_j)-h_{i-j}(M_j)=&\ (h_{i-j}(M_{j+1})+h_{i-j+1}(M-S))\\
                                              &-(h_{i-j-1}(M_{j+1})+h_{i-j}(M-S)).
\end{aligned}$$
But the last equality follows from the deletion-contraction formula, since $M_{j+1}=M_j/e_{j+1}$ and $h_k(M-S)=h_k(M_j-e_{j+1})$ (by \Cref{lm21}(iv)). This finishes the proof.
\end{proof}

\begin{lemma}\label{series2}
 Let $M$ be a connected matroid and $S$ a series class of $M$ with $|S|=m$. Set $\widetilde{M}=M/(S-e)$ for some $e\in S$. Let $h(M)=(h_0(M),h_1(M),\ldots,h_s(M))$ be the $h$-vector of $BC(M)$. Then 
$$
   \bar h_i(M)= \begin{dcases}
                              \sum_{j=0}^{\min\{i,{s-m-i}\}}&\bar h_{j}(\widetilde{M})+\sum_{j=1}^{i}
                              (h_{i-j}(M/S)-h_{s-m+1-j}(M/S))\\
                          &\text{ if }\ 0\le i\le\min\{m-1,s-m+1\},\\
                               \sum_{j=i-m+1}^{\min\{i,{s-m-i}\}}&\bar h_{j}(\widetilde{M})+\sum_{j=1}^{m-1} (h_{i-j}(M/S)-h_{s-i-j}(M/S))\\
                          &\text{ if }\ m-1\le s-m+1\ \text{ and }\ m-1\le i\le\lfloor s/2\rfloor,\\
                           0 &\text{ if }\ s-m+1\le m-1\ \text{ and }\ s-m+1\le i\le\lfloor s/2\rfloor.
                            \end{dcases}
$$
\end{lemma}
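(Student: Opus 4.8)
The plan is to use the isomorphism $M\cong\widetilde M\oplus_2 C$ from \Cref{lm:2sum}, where $C$ is an $(m+1)$-circuit containing $e$, together with the multiplicative behaviour of the $h$-polynomial under parallel connection (\Cref{hpol}(iv)) and the explicit $h$-polynomial of a circuit (\Cref{hpol}(iii)). Since $M\oplus_2 C=P(\widetilde M,C)-e$ and, by \Cref{hpol}(ii), $h(P(\widetilde M,C);t)=h(\widetilde M;t)+t\,h(\widetilde M;t)=\dots$ — more precisely, writing $\widehat M=P(\widetilde M,C)$ we have $h(\widehat M;t)=t^{-1}h(\widetilde M;t)h(C;t)$, and since $e$ is not a coloop of $\widehat M$, the deletion-contraction formula gives $h(M;t)=h(\widehat M;t)-h(\widehat M/e;t)$. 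By \Cref{pr21}(i), $\widehat M/e=\widetilde M/e\oplus C/e$, and $C/e$ is an $m$-circuit matroid that is in fact a coloop-free matroid of rank $m-1$ on $m$ elements whose $h$-polynomial is $t^{m-1}+\cdots+t+1$; also $\widetilde M/e=M/S$. Combining these, one obtains a clean identity expressing $h(M;t)$ — equivalently the coefficients $h_k(M)$ — in terms of $h_k(\widetilde M)$ and $h_k(M/S)$. Concretely, I expect something of the shape
$$h_k(M)=\sum_{j=0}^{m-1}h_{k-j}(\widetilde M)-\Big(\text{a correction supported on }m\text{ consecutive degrees involving }h_\bullet(M/S)\Big),$$
with the correction term coming from subtracting $h(\widehat M/e;t)=(t^{m-1}+\cdots+1)\,h(M/S;t)$ against the "extra" copy of $h(M/S;t)$ that $h(\widehat M;t)$ contributes beyond $\sum_{j=0}^{m-1}t^j h(\widetilde M;t)$. (Here one must use \Cref{lm21} to keep careful track: $\widetilde M$ is connected of rank $r(M)-m+1$, and $M/S=\widetilde M/e$ has rank $r(M)-m$.)

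With such a "master identity" for the $h_k(M)$ in hand, the proof of the lemma becomes a direct computation of $\bar h_i(M)=h_{s-i}(M)-h_i(M)$, substituting the master identity at degrees $i$ and $s-i$. The telescoping in $\sum_{j=0}^{m-1}h_{k-j}$ is what produces the running sums $\sum_j \bar h_j(\widetilde M)$ in the statement; the summation ranges ($0\le j\le\min\{i,s-m-i\}$ versus $i-m+1\le j\le\min\{i,s-m-i\}$) are exactly the overlap of the two index windows $[\,i-m+1,\,i\,]$ and $[\,s-i-m+1,\,s-i\,]$ of length $m$, translated so that $j$ indexes $\widetilde M$ (whose top nonzero $h$-entry is at $s-m+1$, since $r(\widetilde M)=r(M)-m+1$ and $\widetilde M$ is connected). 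The three cases in the statement correspond to whether $i<m-1$, $m-1\le i\le\lfloor s/2\rfloor$ with $m-1\le s-m+1$, or $s-m+1\le m-1$ (the last degenerate case, where $\widetilde M$ is so low-dimensional that $\bar h_j(\widetilde M)$ contributes nothing in the relevant range and everything cancels to $0$); the residual sums involving $h_\bullet(M/S)$ are what is left of the correction terms after the $\bar h_j(\widetilde M)$ have been collected.

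The main obstacle I anticipate is purely bookkeeping: matching the index windows correctly in each of the three regimes, and using $h_k(\widetilde M)=0$ for $k<0$ or $k>r(\widetilde M)=s-m+1$ — together with the fact that $\widetilde M$ being connected forces $h_{s-m+1}(\widetilde M)\ne 0$, so that $\bar h_j(\widetilde M)$ is genuinely $h_{s-m+1-j}(\widetilde M)-h_j(\widetilde M)$ on its support — to truncate the telescoped sums precisely at the stated limits. One also has to verify the boundary behaviour at $i=m-1$ (where the first two cases must agree, serving as a consistency check) and that in the third case the would-be sum is empty. Beyond that, establishing the master identity itself requires only \Cref{lm:2sum}, \Cref{pr21}(i), and parts (ii)--(iv) of \Cref{hpol}, with the one nontrivial point being the correct identification $h(\widehat M/e;t)=(t^{m-1}+t^{m-2}+\cdots+1)\,h(M/S;t)$; none of this involves anything deeper than the Tutte-polynomial identities already recorded. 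I would organize the write-up as: (1) derive the master identity for $h_k(M)$; (2) substitute into $\bar h_i=h_{s-i}-h_i$ and telescope; (3) split into the three index regimes and simplify, using the vanishing and nonvanishing of $h_\bullet(\widetilde M)$ at the ends.
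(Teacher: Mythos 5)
Your plan is exactly the paper's: decompose $M\cong P(\widetilde M,C)-e$ via \Cref{lm:2sum}, apply deletion--contraction, use \Cref{pr21}(i) to split $P(\widetilde M,C)/e$ as a direct sum, invoke \Cref{hpol}(iii)--(iv) to reach a ``master identity'' for $h_k(M)$ in terms of $h_\bullet(\widetilde M)$ and $h_\bullet(M/S)$, and then substitute into $\bar h_i(M)=h_{s-i}(M)-h_i(M)$ and track index windows case by case. The shape of the master identity you write down, and your reading of where the three regimes and the running sums $\sum_j\bar h_j(\widetilde M)$ come from, are all right.

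There is, however, a concrete error at the one place you yourself flag as ``the one nontrivial point.'' You assert $h(\widehat M/e;t)=(t^{m-1}+t^{m-2}+\cdots+1)\,h(M/S;t)$, i.e., that $h(C/e;t)=t^{m-1}+\cdots+t+1$. But $C/e$ is an $m$-circuit of rank $m-1$, so \Cref{hpol}(iii) (with $r=m-1$) gives $h(C/e;t)=t^{m-1}+t^{m-2}+\cdots+t$, with \emph{no} constant term; equivalently, by \Cref{hpol}(i) a connected loopless matroid of rank $r$ has top nonzero $h$-entry at index $r-1$. The spurious constant $1$ adds one extra term to the correction: carried through, your identity would read $h_i(M)=\sum_{j=i-m+1}^{i}h_j(\widetilde M)-\sum_{j=i-m}^{i-1}h_j(M/S)$ instead of the correct $h_i(M)=\sum_{j=i-m+1}^{i}h_j(\widetilde M)-\sum_{j=i-m+1}^{i-1}h_j(M/S)$, and the resulting bounds on the $h_\bullet(M/S)$ sums in $\bar h_i(M)$ would not match the statement. (Incidentally, your description of an ``extra copy of $h(M/S;t)$'' in $h(\widehat M;t)$ is off: $h(\widehat M;t)=t^{-1}h(\widetilde M;t)h(C;t)=\sum_{j=0}^{m-1}t^jh(\widetilde M;t)$ exactly, so there is no surplus to cancel against; the subtraction comes entirely from $h(\widehat M/e;t)$.) You also leave untreated the degenerate situation in which $\widetilde M$ is a single loop (equivalently, $M$ is a circuit); the paper disposes of this first so that it may assume $\widetilde M$ is loopless and connected, which is what licenses $h_{s-m+1}(\widetilde M)\ne0$ and hence the identification $\bar h_j(\widetilde M)=h_{s-m+1-j}(\widetilde M)-h_j(\widetilde M)$. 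Once you correct $h(C/e;t)$ and add this preliminary reduction, the rest of your outline goes through as in the paper.
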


\begin{proof}
Note that $\widetilde{M}$ is connected by \Cref{lm21}(ii). So $\widetilde{M}$ contains a loop if and only if it is itself a loop, which means that $M$ is a circuit. Since the lemma is clearly true in this case, we may henceforth assume that $\widetilde{M}$ is loopless. By \Cref{lm:2sum}, $M\cong P(\widetilde{M}, C)-e$, where $C$ is an $(m+1)$-circuit containing $e$. Thus, the deletion-contraction formula, \Cref{pr21}(i) and \Cref{hpol}(iv) yield
\begin{equation}
 \begin{aligned}
   h(M;t)&=h(P(\widetilde{M}, C);t)-h(P(\widetilde{M}, C)/e;t)\\
              &=h(P(\widetilde{M}, C);t)-h(\widetilde{M}/e\oplus C/e;t)\\
              &=\frac{h(\widetilde{M};t)h(C;t)}{t}-h(M/S;t)h(C/e;t).
  \end{aligned}
\label{eq:2sum}
\end{equation}
Since $r(\widetilde{M})=r(M)-m+1=s-m+2$ (see \Cref{lm21}(ii)) and $r(M/S)=r(\widetilde{M})-1=s-m+1$, we may write
$$\begin{aligned}
   h(\widetilde{M};t)&=h_0(\widetilde{M})t^{s-m+2}+h_1(\widetilde{M})t^{s-m+1}+\cdots+h_{s-m+1}(\widetilde{M})t \quad\mbox{   and }\\
   h(M/S;t)&=h_0(M/S)t^{s-m+1}+h_1(M/S)t^{s-m}+\cdots+h_{s-m}(M/S)t.
  \end{aligned}
$$
Plugging these polynomials into \eqref{eq:2sum} and using \Cref{hpol}(iii) we get
\begin{equation}
 h(M;t)=\left(\sum_{j=0}^{s-m+1}h_j(\widetilde{M})t^{s-m+2-j}\right)\left(\sum_{k=0}^{m-1}t^k\right)-\left(\sum_{j=0}^{s-m}h_j(M/S)t^{s-m+1-j}\right)\left(\sum_{k=1}^{m-1}t^k\right).
\label{eq:2sum2}
\end{equation}
From this formula we will derive formulas for the coefficients of $h(M;t)$, and thereby obtain the desired formula for the complementary $h$-vector. We distinguish two cases:\medskip

 \noindent{\sf Case 1:} $m-1\le s-m+1$.\\
Note that $h_i(M)$ is the coefficient of $t^{s-i+1}$ in $h(M;t)$. So from \eqref{eq:2sum2} we get
\begin{equation}
h_i(M)=\begin{dcases}
               \sum_{j=0}^ih_j(\widetilde{M}) -   \sum_{j=0}^{i-1}h_j(M/S)& \text{ for }\ i\le m-1,\\
               \sum_{j=i-m+1}^ih_j(\widetilde{M}) -   \sum_{j=i-m+1}^{i-1}h_j(M/S)& \text{ for }\ m-1\le i\le s-m+1,\\
               \sum_{j=i-m+1}^{s-m+1}h_j(\widetilde{M}) -   \sum_{j=i-m+1}^{s-m}h_j(M/S)& \text{ for }\ s- m+1\le i\le s.
             \end{dcases}
\label{eq:2sum3}
\end{equation}
As $\widetilde{M}$ is loopless and connected, it follows from \Cref{hpol}(i) that $h_{s-m+1}(\widetilde{M})\ne 0$. Thus $\bar h_{j}(\widetilde{M})=h_{s-m+1-j}(\widetilde{M})-h_{j}(\widetilde{M})$ for $0\le j\le \lfloor \frac{s-m+1}{2}\rfloor$. Now it is readily seen from \eqref{eq:2sum3} that
$$
   \bar h_i(M)=h_{s-i}(M)-h_i(M)
                      = \begin{dcases}
                              \sum_{j=0}^{\min\{i,{s-m-i}\}}&\bar h_{j}(\widetilde{M})+\sum_{j=1}^{i} (h_{i-j}(M/S)-h_{s-m+1-j}(M/S))\\
                          &\text{ for }\ 0\le i\le m-1,\\
                               \sum_{j=i-m+1}^{\min\{i,{s-m-i}\}}&\bar h_{j}(\widetilde{M})+\sum_{j=1}^{m-1} (h_{i-j}(M/S)-h_{s-i-j}(M/S))\\
                          &\text{ for } \ m-1\le i\le\lfloor s/2\rfloor.
                            \end{dcases}
$$

\noindent{\sf Case 2:} $s-m+1< m-1$.\\
In this case, \eqref{eq:2sum2} gives
$$h_i(M)=\begin{dcases}
               \sum_{j=0}^ih_j(\widetilde{M}) -   \sum_{j=0}^{i-1}h_j(M/S)& \text{ for }\ i\le s-m+1,\\
               \sum_{j=0}^{s-m+1}h_j(\widetilde{M}) -   \sum_{j=0}^{s-m}h_j(M/S)& \text{ for }\ s-m+1\le i\le m-1,\\
               \sum_{j=i-m+1}^{s-m+1}h_j(\widetilde{M}) -   \sum_{j=i-m+1}^{s-m}h_j(M/S)& \text{ for }\ m-1\le i\le s.
             \end{dcases}$$
Hence
$$
   \bar h_i(M)  = \begin{dcases}
                              \sum_{j=0}^{\min\{i,{s-m-i}\}}&\bar h_{j}(\widetilde{M})+\sum_{j=1}^{i} (h_{i-j}(M/S)-h_{s-m+1-j}(M/S))\\
                          &\text{ for }\ 0\le i\le s-m+1,\\
                               0 &\text{ for } \ s-m+1\le i\le\lfloor s/2\rfloor.
                            \end{dcases}
$$

The desired formula for $\bar h_i(M)$ is obtained by combining the two cases above.
\end{proof}

\begin{example}
Let us revisit the cycle matroid $M$ of the complete bipartite graph $K_{2,3}$ discussed in \Cref{ex}. Notice that the series class $S=\{1,2\}$ of $M$ is removable. The graphs corresponding to the minors $\widetilde{M}=M/\{1\},\ M-S,\ M/S$ of $M$ are depicted in \Cref{fig1}. Using \Cref{hpol} one easily finds that $h(M/S;t)=t^2,$ $h(M-S;t)=t^3+t^2+t,$ $h(\widetilde{M};t)=t^3+2t^2+t$, and $h(M;t)=t^4+2t^3+3t^2+t$. Thus $\bar h(M)=(1-1,3-2)=(0,1)$. This agrees with the computation of $\bar h(M)$ using \Cref{series1} or \Cref{series2}. For example, by \Cref{series1},
$$\bar h_1(M)=\bar h_1(M/\{1\})+\bar h_0(M-S)+(h_0(M/S)-h_{-1}(M/S))=0+0+(1-0)=1.$$
On the other hand, by \Cref{series2},
$$\bar h_1(M)=\bar h_0(\widetilde{M})+(h_0(M/S)-h_{1}(M/S))=0+(1-0)=1.$$
\end{example}

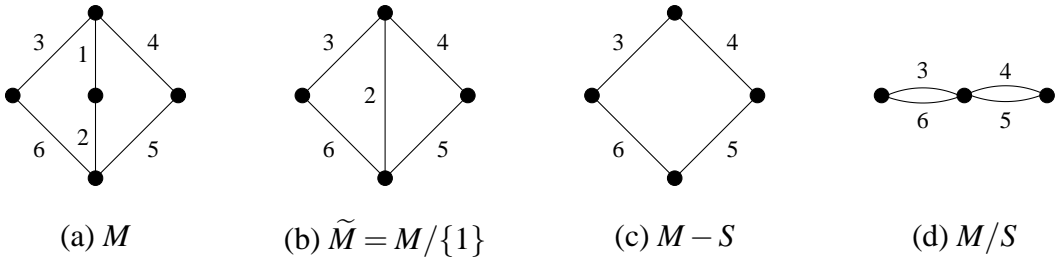
\begin{figure}[htb]

	\begin{tikzpicture}[scale=1.1]
	\draw  (1.,2.)-- (0.,1.);
	\draw  (0.,1.)-- (1.,0.);
	\draw  (1.,0.)-- (2.,1.);
	\draw  (2.,1.)-- (1.,2.);
	\draw  (1.,2.)-- (1.,0.);
	
	\draw  (4.5,2.)-- (5.5,1.);
	\draw  (5.5,1.)-- (4.5,0.);
	\draw  (4.5,0.)-- (3.5,1.);
	\draw  (3.5,1.)-- (4.5,2.);
	\draw  (4.5,2.)-- (4.5,0.);
	
	\draw  (7.,1.)-- (8.,0.);
	\draw  (8.,0.)-- (9.,1.);
	\draw  (9.,1.)-- (8.,2.);
	\draw  (8.,2.)-- (7.,1.);
	
	\draw [shift={(11,0.)}] plot[domain=1.156289452210111:2.081284648711673,variable=\t]({1.*1.0925200226998129*cos(\t r)+0.*1.0925200226998129*sin(\t r)},{0.*1.0925200226998129*cos(\t r)+1.*1.0925200226998129*sin(\t r)});
	\draw [shift={(11,2.22)}] plot[domain=4.337548265029092:5.115300571238205,variable=\t]({1.*1.311030129325791*cos(\t r)+0.*1.311030129325791*sin(\t r)},{0.*1.311030129325791*cos(\t r)+1.*1.311030129325791*sin(\t r)});
	\draw [shift={(12,0.)}] plot[domain=1.1071487177940904:2.0344439357957027,variable=\t]({1.*1.118033988749895*cos(\t r)+0.*1.118033988749895*sin(\t r)},{0.*1.118033988749895*cos(\t r)+1.*1.118033988749895*sin(\t r)});
	\draw [shift={(12,2.16)}] plot[domain=4.365003017466369:5.176036589385496,variable=\t]({1.*1.233693641063291*cos(\t r)+0.*1.233693641063291*sin(\t r)},{0.*1.233693641063291*cos(\t r)+1.*1.233693641063291*sin(\t r)});
	
	\draw (1.,-0.72) node {(a) $M$};
	\draw (4.5,-0.72) node {(b) $\widetilde{M}=M/\{1\}$};
	\draw (8.,-0.72) node {(c) $M-S$};
	\draw (11.5,-0.72) node {(d) $M/S$};
	
	\begin{scriptsize}
	\draw [fill=black] (1.,2.) circle (2.5pt);
	\draw [fill=black] (0.,1.) circle (2.5pt);
	\draw [fill=black] (1.,0.) circle (2.5pt);
	\draw [fill=black] (2.,1.) circle (2.5pt);
	\draw [fill=black] (1.,1.) circle (2.5pt);
	
    \draw (0.85,1.5) node {$1$};
    \draw (0.85,0.5) node {$2$};
	\draw (0.32,1.65) node {$3$};
	\draw (0.32,0.35) node {$6$};
	\draw (1.7,0.35) node {$5$};
	\draw (1.7,1.65) node {$4$};

	\draw [fill=black] (4.5,2.) circle (2.5pt);
	\draw [fill=black] (5.5,1.) circle (2.5pt);
	\draw [fill=black] (4.5,0.) circle (2.5pt);
	\draw [fill=black] (3.5,1.) circle (2.5pt);
	
	\draw (5.2,1.65) node {$4$};
	\draw (5.2,0.35) node {$5$};
	\draw (3.82,0.35) node {$6$};
	\draw (3.82,1.65) node {$3$};
	\draw (4.32,1.) node {$2$};

	\draw [fill=black] (7.,1.) circle (2.5pt);
	\draw [fill=black] (8.,0.) circle (2.5pt);
	\draw [fill=black] (9.,1.) circle (2.5pt);
	\draw [fill=black] (8.,2.) circle (2.5pt);

	\draw (7.32,0.35) node {$6$};
	\draw (8.7,0.35) node {$5$};
	\draw (8.7,1.65) node {$4$};
	\draw (7.32,1.65) node {$3$};

	\draw [fill=black] (10.5,1.) circle (2.5pt);
	\draw [fill=black] (11.5,1.) circle (2.5pt);
	\draw [fill=black] (12.5,1.) circle (2.5pt);
	
	\draw (11.,1.3) node {$3$};
	\draw (11.,0.7) node {$6$};
	\draw (12.,1.3) node {$4$};
	\draw (12.,0.7) node {$5$};
	\end{scriptsize}
	\end{tikzpicture}
	\caption{Minors of $M$ related to removable series class $S=\{1,2\}$}\label{fig1}
\end{figure}

We are now ready to prove our main result.

\begin{proof}[Proof of \Cref{th32}] 
Let $M\in\mathcal{M}$ and let $h(M)=(h_0(M),h_1(M),\ldots,h_s(M))$ be the $h$-vector of $BC(M)$. Since $h(M)$ is unimodal by assumption, it suffices to prove that $h(M)$ is flawless, i.e., the complementary $h$-vector of $BC(M)$ is nonnegative. We proceed by induction on the cardinality of the ground set $E$ of $M$. 

If $|E|=1$, then $h(M)=(1)$ and we have nothing to prove. So suppose $|E|\ge2$. We first show that we can reduce to the case where $M$ is minimally connected. By Lemmas \ref{pr21}(ii), \ref{hpol}(iv) and \ref{red}, we may assume that $M$ is connected, and furthermore, parallel irreducible. Thus, by \Cref{pr21}(i), $M/e$ is connected for every $e\in E$. We will show that $\bar h_i(M)\ge 0$ for $0\le i\le \lfloor s/2\rfloor$ if there exists $e\in E$ with $M-e$ connected. Indeed, if $s$ is even and $i=s/2$, then $\bar h_i(M)=0$. Now assume that $s$ is odd or $i<s/2$. Then $i\le \lfloor (s-1)/2\rfloor$. Using the deletion-contraction formula we have
\begin{align}
 \begin{aligned}
   \bar h_i(M)=&\ h_{s-i}(M)-h_i(M)\\
                    =&\ (h_{s-i}(M-e)+h_{s-i-1}(M/e))-(h_{i}(M-e)+h_{i-1}(M/e))\\
                    =&\ (h_{s-i}(M-e)-h_{i}(M-e))+(h_{s-i-1}(M/e)-h_{i}(M/e))\\
                      &+(h_{i}(M/e)-h_{i-1}(M/e))\\
                    =&\ \bar h_i(M-e)+\bar h_i(M/e)+(h_{i}(M/e)-h_{i-1}(M/e)).
  \end{aligned}
  \label{eq:mc}
\end{align}
 The last equality follows since $M-e$ and $M/e$ are connected. By the induction hypothesis, the $h$-vectors of $BC(M-e)$ and $BC(M/e)$ are strongly flawless, implying that each summand of $\bar h_i(M)$ in the last row of  \eqref{eq:mc} is nonnegative. Therefore, $\bar h_i(M)$ is nonnegative as well.

Henceforth we may assume that $M$ is minimally connected. Then $M$ contains a non-trivial removable series class by \Cref{rsc}. Let $S$ be such a series class of $M$ with $|S|=m$. Given $0\le i\le \lfloor s/2\rfloor$, let us verify that $\bar h_i(M)\ge 0$. If $i\le m-1$, then $\bar h_i(M)\ge 0$ by \Cref{series1} and the induction hypothesis. Now consider the case $i>m-1$. Since $i\le \lfloor s/2\rfloor$, we must have $m-1<s-m+1$. It then follows from Lemmas \ref{series1}, \ref{series2} and the induction hypothesis that
$$\bar h_i(M)\ge \max\{h_{i-m+1}(M/S)-h_{i-m}(M/S),\sum_{j=1}^{m-1} (h_{i-j}(M/S)-h_{s-i-j}(M/S))\}.$$
Thus, if $h_{i-m+1}(M/S)\ge h_{i-m}(M/S)$, then $\bar h_i(M)\ge 0$. Suppose now that $h_{i-m}(M/S)>h_{i-m+1}(M/S)$. Then the unimodality of the $h$-vector of $BC(M/S)$ yields
$$
 h_{i-m+1}(M/S)\ge \cdots\ge h_{i-1}(M/S)\ge \cdots \ge h_{s-i-1}(M/S).
$$
It follows that for $1\le j \le m-1$, we have $h_{i-j}(M/S)\ge  h_{s-i-j}(M/S)$, because $i-m+1\le i-j\le s-i-j\le s-i-1$. Hence
$$\sum_{j=1}^{m-1} (h_{i-j}(M/S)-h_{s-i-j}(M/S))\ge 0,$$
which also implies that $\bar h_i(M)\ge 0$. The proof is complete.
\end{proof}

As a consequence of \Cref{th32} we verify \Cref{con} for matroids representable over a field of characteristic zero.

\begin{corollary}\label{co34}
 Let $M$ be a matroid representable over a field of characteristic zero. Then the $h$-vector of $BC(M)$ is strongly flawless. 
\end{corollary}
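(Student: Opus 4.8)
The plan is to realise \Cref{co34} as a direct consequence of \Cref{th32}: I would exhibit a minor-closed class containing $M$ that is unimodal, Huh's log-concavity theorem supplying the unimodality. Fix a field $K$ of characteristic zero over which $M$ is representable, and let $\mathcal{M}$ be the class of all matroids representable over $K$. First I would note that $\mathcal{M}$ is minor-closed: both the deletion $N-e$ and the contraction $N/e$ of a $K$-representable matroid $N$ are again representable over $K$ (see \cite[Section 3.2]{Ox11}), so every minor of a member of $\mathcal{M}$ belongs to $\mathcal{M}$; and $M\in\mathcal{M}$ by hypothesis.

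Next I would verify that $\mathcal{M}$ is unimodal in the sense adopted in this paper, that is, that for every $N\in\mathcal{M}$ the $h$-vector of $BC(N)$ is unimodal. If $N$ has a loop then $BC(N)=\emptyset$ and there is nothing to check, so assume $N$ is loopless and write $h(N)=(h_0,h_1,\ldots,h_s)$ with $h_s\ne0$. By Huh's theorem \cite{Hu15}, this sequence is log-concave, i.e.\ $h_j^2\ge h_{j-1}h_{j+1}$ for $1\le j\le s-1$. By \Cref{hpol}(i) we have $h_0=1>0$ and $h_s>0$, and log-concavity then precludes any internal zero: if $h_j=0$ with $0<j<s$, then $h_{j-1}h_{j+1}\le0$ forces $h_{j-1}=0$ or $h_{j+1}=0$ and, on iterating, $h_0=0$ or $h_s=0$, a contradiction. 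Hence $(h_0,\ldots,h_s)$ is a log-concave sequence of positive reals, and every such sequence is unimodal (as observed in the introduction). Thus $\mathcal{M}$ is unimodal.

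Finally I would apply \Cref{th32} to the minor-closed, unimodal class $\mathcal{M}$, concluding that $\mathcal{M}$ is strongly flawless; in particular the $h$-vector of $BC(M)$ is strongly flawless, which is the assertion.

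Since \Cref{th32} and \cite{Hu15} do the heavy lifting, no serious obstacle arises: the only step that is not purely formal is the passage from log-concavity to unimodality, which rests on the positivity of $h_0$ and $h_s$ from \Cref{hpol}(i) together with the short no-internal-zeros observation above.
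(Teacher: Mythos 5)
Your proof is correct and follows the same route as the paper: take the minor-closed class of matroids representable over a characteristic-zero field, invoke Huh's log-concavity theorem \cite{Hu15} to get unimodality, and apply \Cref{th32}. You add one useful detail that the paper leaves implicit, namely the argument that log-concavity together with $h_0>0$ and $h_s>0$ rules out internal zeros, so the sequence really is a log-concave sequence of \emph{positive} entries and hence unimodal; this is the observation the paper relies on (stated in its introduction) but does not re-derive in the proof of \Cref{co34}.
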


\begin{proof}
 Let $\mathcal{M}$ be the class of matroids representable over a field of characteristic zero. Then it is well-known that $\mathcal{M}$ is minor-closed; see \cite[Proposition 3.2.4]{Ox11}. Moreover, it follows from Huh's log-concavity result \cite[Theorem 3]{Hu15} that $\mathcal{M}$ is unimodal. So $\mathcal{M}$ is strongly flawless by \Cref{th32}.
\end{proof}

Let us now derive an application of \Cref{co34} to Orlik--Terao algebras. Recall that a (central) \emph{complex hyperplane arrangement} $\mathcal{A}=\{H_1,\ldots,H_n\}$ is a collection of hyperplanes in $\mathbb{C}^{r}$, all of which contain the origin of $\mathbb{C}^{r}$. Suppose each hyperplane $H_i$ of $\mathcal{A}$ is given as the kernel of a linear form $\alpha_i$. Then the \emph{Orlik--Terao algebra} of $\mathcal{A}$ is defined to be the $\mathbb{C}$-algebra generated by reciprocals of the $\alpha_i$'s:
$$C(\mathcal{A}):=\mathbb{C}[1/\alpha_1,\ldots,1/\alpha_n].$$
This algebra was introduced by Orlik and Terao in \cite{OT94}. Since then it has appeared in different contexts and received considerable attention; see e.g., \cite{ BP09, DGT14, Le14, LM15, LR13, MP15, PS06, SSV13, Sc11, ST09, Te02}. An interesting property of $C(\mathcal{A})$ is that it degenerates flatly to the Stanley--Reisner ring of the broken circuit complex of the \emph{underlying matroid} $M(\mathcal{A})$ of $\mathcal{A}$ \cite[Theorem 4]{PS06}. Thus, in particular, $C(\mathcal{A})$ is a Cohen--Macaulay ring and its $h$-vector coincides with the $h$-vector of $BC(M(\mathcal{A}))$.
Recall that the underlying matroid $M(\mathcal{A})$ is defined to be the matroid on ground set $\mathcal{A}$ such that a subset $B = \{H_{ i_ 1},\ldots , H _{i_ p}\}$ of $\mathcal{A}$ is independent if and only if the corresponding linear forms $\alpha_{i_1}, \ldots,\alpha_{i_p}$ are linearly independent. Evidently, $M(\mathcal{A})$ is representable over $\mathbb{C}$. So from \Cref{co34} we immediately get the following:

\begin{corollary}
 Let $\mathcal{A}$ be a complex hyperplane arrangement. Then the $h$-vector of the Orlik--Terao algebra of  $\mathcal{A}$ is strongly flawless.
\end{corollary}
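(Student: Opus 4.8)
The plan is to obtain this statement as a direct consequence of \Cref{co34}. The bridge is the well-known dictionary between the Orlik--Terao algebra and the broken circuit complex: by the flat degeneration theorem of Proudfoot and Speyer \cite{PS06}, the algebra $C(\mathcal{A})$ degenerates flatly to the Stanley--Reisner ring of $BC(M(\mathcal{A}))$. Since a flat degeneration preserves the Hilbert function, and hence the numerator polynomial of the Hilbert series taken over a suitable power of $(1-t)$, the $h$-vector of $C(\mathcal{A})$ agrees with the $h$-vector of $BC(M(\mathcal{A}))$.

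First I would observe that the underlying matroid $M(\mathcal{A})$ is, by definition, the vector matroid of the $r\times n$ matrix over $\mathbb{C}$ whose columns are the defining linear forms $\alpha_1,\ldots,\alpha_n$; in particular $M(\mathcal{A})$ is representable over $\mathbb{C}$, a field of characteristic zero. Then \Cref{co34} applies to $M(\mathcal{A})$ and tells us that the $h$-vector of $BC(M(\mathcal{A}))$ is strongly flawless. Combined with the identification of $h$-vectors from the first paragraph, this immediately yields that the $h$-vector of the Orlik--Terao algebra of $\mathcal{A}$ is strongly flawless, which is the assertion.

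There is no real obstacle in this argument; the corollary is essentially a repackaging of \Cref{co34}. The only mild care needed is to make precise what ``the $h$-vector of a graded algebra'' means here --- namely the coefficient sequence of the numerator of its Hilbert series written with denominator $(1-t)^{\dim C(\mathcal{A})}$, which is legitimate because $C(\mathcal{A})$ is Cohen--Macaulay of Krull dimension $r(M(\mathcal{A}))$ --- and to invoke the Proudfoot--Speyer degeneration correctly, so that this numerator indeed coincides with the $h$-polynomial $h(M(\mathcal{A});t)$ of the simplicial complex $BC(M(\mathcal{A}))$.
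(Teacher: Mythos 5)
Your argument is exactly the paper's: identify the $h$-vector of $C(\mathcal{A})$ with that of $BC(M(\mathcal{A}))$ via the Proudfoot--Speyer flat degeneration, note that $M(\mathcal{A})$ is representable over $\mathbb{C}$, and apply \Cref{co34}. The extra care you take in spelling out what the $h$-vector of the graded algebra means is fine but adds nothing beyond what the paper already asserts.
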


It should be noted here that $C(\mathcal{A})$ has a canonical linear system of parameters \cite[Proposition 7]{PS06} and that, similar to Swartz's examples mentioned in the introduction, the corresponding Artinian reduction of $C(\mathcal{A})$ needs not have $g$-elements \cite[Remark 8]{PS06}. It would therefore be difficult to provide an algebraic proof of the above corollary.

\section{Concluding remarks}\label{sec4}

In view of our main result (\Cref{th32}), \Cref{con} would follow from the first one of the following successively stronger conjectured assertions:

\begin{conjecture}\label{con2}
Let $h(M)=(h_0,h_1,\ldots,h_s)$ be the $h$-vector of the broken circuit complex of a matroid $M$. Set $h'_i=h_i/\binom{h_1+i-1}{i}$ for $i=0,1,\ldots, s$. Then
\begin{enumerate}
 \item $h(M)$ is unimodal.

 \item $h(M)$ is log-concave.

 \item $h(M)$ is strongly log-concave, i.e., the sequence $(h'_0,h'_1,\ldots,h'_s)$ is log-concave.
\end{enumerate}
 \end{conjecture}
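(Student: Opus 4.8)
We conclude by sketching how one might hope to settle \Cref{con2}. Since the three assertions are successively stronger, and since part (i) already implies \Cref{con} via \Cref{th32}, the essential target is unimodality, part (i); parts (ii) and (iii) strengthen the conclusion but rest on the same structures. The obstruction in (i) is structural: by Swartz's examples \cite{Sw03} a generic Artinian reduction of the Stanley--Reisner ring of $BC(M)$ need not carry a weak Lefschetz element, and $BC(M)$ need not admit a convex ear decomposition, so unimodality of $h(M)$ cannot be extracted from the face ring of $BC(M)$ itself --- and for the same reason the proofs of Hibi's conjecture for $IN(M)$ do not transfer.

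The plan for (i) is to realize $h(M;t)=T(M;t,0)=(-1)^{r}\chi(M;1-t)$ as a degree shift of the Hilbert series of a \emph{different} graded algebra attached to $M$ that does enjoy a K\"ahler package, so that Hard Lefschetz --- injectivity of multiplication by a generic linear form up to the middle degree --- forces unimodality. Several frameworks invite this: the Chow ring $A^{\bullet}(M)$ of Adiprasito--Huh--Katz, whose intersection numbers encode $\chi(M;t)$, although the substitution $t\mapsto 1-t$ scrambles the natural grading so one would need a Lefschetz operator adapted to the corresponding filtration; the augmented Chow ring and graded M\"obius algebra of a matroid; or a toric model such as that arising from the stellahedron, whose intersection theory is known to compute Tutte evaluations. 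In each case the decisive step is to identify \emph{which} of these objects has $T(M;t,0)$ as its (shifted) Hilbert or intersection-number series; I expect this identification, rather than the ensuing Hodge theory, to be the main obstacle.

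A more self-contained alternative is to prove (i) by the inductive apparatus already developed in \Cref{sec3}: induct on $|E|$, reduce via \Cref{pr21}(ii), \Cref{hpol}(iv) and \Cref{red}-type splitting to $M$ connected, parallel-irreducible, and finally minimally connected, then take a removable series class $S$ (\Cref{rsc}) and use the $2$-sum identity \eqref{eq:2sum2} to write $h(M;t)$ in terms of $h(\widetilde M;t)$, $h(M/S;t)$ and the cyclotomic factors $t+\cdots+t^{m}$. Since neither sums nor differences of unimodal polynomials are unimodal in general, the heart of such a proof is to pin down the \emph{location} of the mode under deletion--contraction and $2$-sums and to show that the contributions reinforce near it --- the complementary $h$-vector and the explicit formulas of \Cref{series1,series2} being exactly the bookkeeping device for this. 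Here the missing ingredient is a monotonicity statement such as $h_i(M/e)\le h_i(M-e)$ on the relevant range of $i$, which would make the recursion $h(M;t)=h(M-e;t)+h(M/e;t)$ preserve unimodality but which is itself open.

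For (ii) in arbitrary characteristic one additionally needs the degree-$1$ Hodge--Riemann relations for the algebra above, exactly as the Hodge--Riemann relations for $A^{\bullet}(M)$ yield log-concavity of $\chi(M;t)$ in \cite{AHK}; Huh's characteristic-zero proof \cite{Hu15} exploits the concrete geometry of the Orlik--Terao variety, which has no evident positive-characteristic analogue. Part (iii) is the strongest: the normalizing factors $\binom{h_1+i-1}{i}$ are the numbers of degree-$i$ monomials in $h_1$ variables, hence already log-concave in $i$ (which is why (iii) implies (ii)), and log-concavity of $(h_i/\binom{h_1+i-1}{i})$ is a completely-log-concave / Lorentzian-type requirement on $h(M;t)$ in the sense of Br\"and\'en--Huh. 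The natural route is to verify that this property descends through deletion--contraction and $2$-sums from the trivially Lorentzian base cases (points and circuits); the obstruction is that Lorentzian polynomials are not closed under the differences appearing in \eqref{eq:2sum2}, so one would need to group those differences into manifestly Lorentzian blocks.
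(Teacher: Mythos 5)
This statement is \Cref{con2}, which the paper presents as an \emph{open conjecture} attributed to Brylawski, not as a result it proves. The paper's only contributions surrounding it are (a) the observation that part (i) would imply \Cref{con} via \Cref{th32}, (b) the citation of Huh's characteristic-zero proof of part (ii), and (c) the post-publication remark that Huh and coauthors announced a resolution of part (ii) in general. There is therefore no proof in the paper against which your text can be compared.

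Your submission is, correctly, a survey of strategies rather than a proof, and you say so; judged as such it is reasonable. The logical ordering (iii) $\Rightarrow$ (ii) $\Rightarrow$ (i) is right, and the remark that $\binom{h_1+i-1}{i}$ is the count of degree-$i$ monomials in $h_1$ variables, which is itself a log-concave sequence, correctly explains why strong log-concavity is stronger than log-concavity. The observation that Swartz's examples rule out extracting Lefschetz behaviour directly from a generic Artinian reduction of the face ring of $BC(M)$ is exactly the obstruction the paper itself points to, and the proposed remedy --- realizing $T(M;t,0)$ inside an algebra with a K\"{a}hler package --- is the route that the paper's final remark indicates did eventually succeed for part (ii). Your description of the inductive alternative also correctly identifies the missing ingredient (a monotonicity such as $h_i(M/e)\le h_i(M-e)$) and correctly notes that the differences in \eqref{eq:2sum2} are the reason the $2$-sum machinery of \Cref{series1,series2} alone does not yield unimodality. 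The one place where you should be more guarded is the claim that ``the identification, rather than the ensuing Hodge theory'' would be the main obstacle: in the developments this paper alludes to, establishing the Hodge--Riemann relations is the heavy lifting, not the dictionary between the Tutte evaluation and the Hilbert/intersection numbers.

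The genuine gap is simply that none of the sketched routes is carried out: no algebra with the required Hilbert series and Lefschetz property is exhibited, the monotonicity needed for the deletion--contraction induction is not established, and no Lorentzian structure on $h(M;t)$ is produced. This is unavoidable given that the conjecture was open at the time of the paper; but your text should not be read as a proof of \Cref{con2}, only as commentary on it, and it would be misleading to splice it in place of one.
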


This still wide open conjecture was proposed by Brylawski \cite[p. 232]{Bry82}. Therein, he also showed that \Cref{con2}(ii) is stronger than Rota--Heron's conjecture \cite{Ro71, He72} and Welsh's conjecture \cite{We76}. As we mentioned before, significant progress towards proving \Cref{con2}(ii) was made by Huh \cite{Hu15}, who verified it for matroids representable over a field of characteristic zero. 

Concerning \Cref{con} it is also worth noting the following question:

\begin{question}
Let $M$ be a matroid and let $h(M)=(h_0,h_1,\ldots,h_s)$ be the $h$-vector of $BC(M)$, where $h_s\neq 0$. Define $g(M)=(1,h_1-h_0,\ldots,h_{\lfloor s/2\rfloor}-h_{\lfloor s/2\rfloor -1})$ to be the $g$-{\rm vector} of $BC(M)$. Is it always true that $g(M)$ is an $O$-sequence?
\end{question}

This question together with \Cref{con} was posed by Swartz in \cite{Sw03}, where he gave an affirmative answer to the question in the case of independence complexes. We believe that this question should also have an affirmative answer for broken circuit complexes in general. However, we would like to remark that it is not clear whether the question can be reduced to the case of parallel irreducible matroids. For this, one would, in analogy with \Cref{red}, need that the property of the $g$-vector being an $O$-sequence is preserved under taking products. Currently, in joint work with Uwe Nagel, the first author is investigating this problem. 

\begin{remark}(This remark is not contained in the published version of the paper.)
At a recent workshop in Oberwolfach (from 11 to 17 December 2016), June Huh informed the second author that he and his coauthors had resolved \Cref{con2}(ii) by mimicking their method applied to $f$-vectors in \cite{AHK}. Thus \Cref{con} now holds true in its full generality.
\end{remark}

\section*{Acknowledgments} 

We wish to thank Ed Swartz for pointing out that \Cref{con2} is due to Brylawski. We are also grateful to the referees, whose thoughtful comments and useful suggestions led to several improvements in the paper. The first author was supported by the German Research Council DFG-GRK~1916.

\end{document}